\documentclass[a4paper]{amsart}

\usepackage[foot]{amsaddr}
\usepackage[utf8]{inputenc}
\usepackage[nocompress]{cite}
\usepackage{graphicx}
\usepackage{subcaption}
\usepackage{epsf, tikz, tkz-graph}
\usepackage{amssymb,amsmath,eufrak,amsthm,amscd,mathtools}
\usepackage{graphics}
\usepackage{textcomp}
\usepackage{comment}
\usepackage{enumitem}
\usepackage{verbatim}
\usepackage{csquotes, nicefrac}
\usepackage{capt-of,caption}
\usepackage{graphicx,newfloat,float}

\setcounter{tocdepth}{1}
\newtheorem{theorem}{Theorem}[section]

\newtheorem*{statement*}{Statement}
\newtheorem*{theorem*}{Theorem}
\newtheorem*{lemma*}{Lemma}
\newtheorem*{fact*}{Fact}

\theoremstyle{definition}
\newtheorem{definition}[theorem]{Definition}
\newtheorem*{definition*}{Definition}

\newtheorem*{example*}{Example}

\newtheorem*{exercise*}{Exercise}

\newtheorem*{proposition*}{Proposition}

\newtheorem*{corollary*}{Corollary}

\newtheorem*{claim*}{Claim}

\newtheorem*{test*}{Test}

\theoremstyle{remark}

\newtheorem*{remark*}{Remark}

\makeatletter
\newcommand{\l@abcd}[2]{\hbox to\textwidth{#1\dotfill #2}}

\newcommand{\e}{\varepsilon}

\newcommand{\f}{\varphi}

\title[Graphical Designs and Extremal Combinatorics]{Graphical Designs and Extremal Combinatorics}
\author[K. Golubev]{Konstantin Golubev}
\email{golubevk@ethz.ch}
\address{D-MATH, ETH Zurich, Switzerland}
\address{Moscow Center for Fundamental and Applied Mathematics, Russia}
\date{\today}
\keywords{Graph, Laplacian, Graph Laplacian, Sampling, Design}
\subjclass[2010]{05B99, 05C50, 05C70,35P05,	05C35, 05C69}

\begin{document}

\maketitle

\begin{abstract}A graphical design is a proper subset of vertices of a graph on which many eigenfunctions of the Laplacian operator have mean value zero. In this paper, we show that extremal independent sets make extremal graphical designs, that is, a design on which the maximum possible number of eigenfunctions have mean value zero. We then provide examples of such graphs and sets, which arise naturally in extremal combinatorics. 
We also show that sets which realize the isoperimetric constant of a graph make extremal graphical designs, and provide examples for them as well. We investigate the behavior of graphical designs under the operation of weak graph product. In addition, we present a family of extremal graphical designs for the hypercube graph.
\end{abstract}
\section{Introduction}
Let $G$ be a $d$-regular graph on the vertex set $V$ of size $n$ with no loops or multiple edges. Let $A$ denote the normalized adjacency operator on the space $\mathbb{R}^{V}$ of real-valued functions on the vertices of $G$. That is, for $f\in \mathbb{R}^{V}$ and a vertex $v\in V$
\[
 (Af)(v) = \frac{1}{d} \sum_{u\sim v} f(u),
\]
where $u\sim v$ means that $u$ and $v$ are connected by an edge in $G$. The adjacency operator is symmetric bounded operator of norm $1$, hence its eigenvalues are real and satisfy
\[
 -1\leq \lambda_n\leq \dots\leq \lambda_2 \leq \lambda_1 = 1.
\]
The graph is connected iff the eigenvalue $1$ has multiplicity $1$ (see, e.g.~\cite{alon1986eigenvalues}). From now on, we assume that $G$ is connected, and in this case the only eigenfunction with eigenvalue $1$ is the constant function. We can endow $\mathbb{R}^{V}$ with the standard inner product, defined as, for $f,g\in \mathbb{R}^{V}$
\[
\langle f,g \rangle = \sum_{v\in V} f(v)g(v).
\]
The problem of \emph{graphical designs} was posed in~\cite{steinerberger2018generalized} as follows.
 Let $G$ be a finite, simple, connected graph. Suppose there exists a subset $W\subset V$ with weights $a_w\in\mathbb{R}$ for $w\in W$ such that $\forall\, 1\leq k \leq K$
\begin{equation}\label{eq:graph-des}
\frac{1}{|V|}\sum_{v\in V}\varphi_k(v) = \sum_{w\in W} a_w \varphi_k(w),  
 \end{equation}
 where $\varphi_k$ is an eigenfunction of $A$ with eigenvalue $\lambda_k$. 
How big can $K$ be (depending on $|W|$)? How does this depend on $G$? How would one find sets $W$ having $K$ large?

In~\cite{steinerberger2018generalized}, the Laplace operator $\Delta$ of a graph was considered, defined as
\[
 (\Delta f)(v) = \sum_{u\sim v}\left(\frac{f(u)}{\deg(u)} - \frac{f(v)}{\deg(v)}\right).
\]
For a regular graph, the Laplacian $\Delta$ and the normalized adjacency operator $A$ satisfy
\[
 \Delta = A - I,
\]
where $I$ stands for the identity operator. In particular, these operators share eigenfunctions.

\begin{figure}[h!]\centering
  \begin{tikzpicture}[scale=0.7]

\foreach \a in {1,2,...,9}{

\draw (\a*360/9: 3cm) --  (\a*360/9 + 360/9: 3cm);

};

\draw  (0.8, -0.4) --  (8*360/9: 3cm);

\draw  (0.8, -0.4) -- (-0.8, -0.4) -- (0,1) -- (0.8, -0.4);

\draw  (-0.8, -0.4) --  (5*360/9: 3cm);

\draw  (0,1) --  (2*360/9: 3cm);

\draw  (1*360/9: 3cm) --  (3*360/9: 3cm);

\draw  (4*360/9: 3cm) --  (6*360/9: 3cm);

\draw  (7*360/9: 3cm) --  (9*360/9: 3cm);

\foreach \a in {1,2,...,9}{

\node[circle,draw=black, fill=white, inner sep=0pt,minimum size=6pt] () at (\a*360/9: 3cm) {};
};
\node[circle,draw=black, fill=white, inner sep=0pt,minimum size=6pt] () at (0.8, -0.4) {};
\node[circle,draw=black, fill=white, inner sep=0pt,minimum size=6pt] () at (0, 1) {};

\node[circle,draw=black, fill=black, inner sep=0pt,minimum size=6pt] () at (1*360/9: 3cm) {};
\node[circle,draw=black, fill=black, inner sep=0pt,minimum size=6pt] () at (5*360/9: 3cm) {};
\node[circle,draw=black, fill=black, inner sep=0pt,minimum size=6pt] () at (9*360/9: 3cm) {};
\node[circle,draw=black, fill=black, inner sep=0pt,minimum size=6pt] () at (-0.8, -0.4) {};
\end{tikzpicture}
\caption{The Truncated Tetrahedral Graph on 12 vertices: there exists an orthonormal basis of eigenfunctions, such that $10$ non-constant eigenfunctions of it have mean value zero on the subset of $4$ black vertices. (Courtesy of Steinerberger,~\cite{steinerberger2018generalized}.)}
\end{figure}
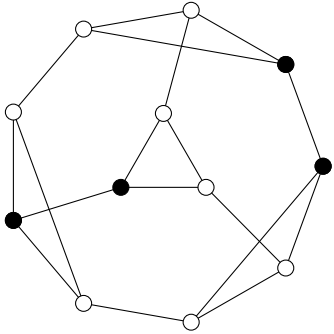
As it is pointed out in the same paper, there is some ambiguity when eigenvalues have large multiplicity. The problem of graphical designs was inspired by an analogous question on the sphere, and while on the sphere the enumeration of eigenfunctions sometimes is taken into account, it is not yet justified why it should matter so for graphs. 
In this paper, we study a certain subclass of graphical designs, which we baptize \emph{extremal}, and connect it to a well-developed branch of extremal combinatorics. We also narrow our consideration to the case of graphical designs $W$ of constant weights $a_w = {1}/{|W|}$.

\begin{definition} A graphical design on a graph $G$ is a proper subset $W\subset V$. Its order is the minimal such $k$ that the characteristic function $1_W$ of $W$ is a linear combination of the constant function and $k$ other eigenfunctions of $A$. A graphical design of order $1$ is called extremal.
\end{definition}
The justification for such a definition is that for a graphical design of order $k$ there exists a basis of eigenfunctions with respect to which it satisfies Equation~(\ref{eq:graph-des}) for all but $k$ eigenfunctions in the basis. Namely, if $1_W$ is a linear combination of the constant function and eigenfunctions $\varphi_{n_1},\dots,\varphi_{n_k}$, where $1<n_1<n_2\dots <n_k$, then one can complete them to an orthogonal basis of eigenfunctions $\varphi_1,\dots,\varphi_n$. Then for $1 < k \leq n$ such that $k\not\in \{n_1,\dots,n_k\}$, we have
\[
 \frac{1}{|V|}\sum_{v\in V}\varphi_k(v) = 0 = \frac{1}{|W|} \langle \varphi_k, 1_W \rangle = \frac{1}{|W|}\sum_{w\in W} \varphi_k(w).
\]
Equation~(\ref{eq:graph-des}) is also satisfied for $k=1$ as $\varphi_1$ is a constant function and it has mean value $1$ for every non-empty subset of vertices. 

In this paper, we show that extremal graphical designs can be found in maximal independent sets and in subsets realizing the isoperimetric constant, and provide infinite families of examples arising naturally in extremal combinatorics.   

\subsubsection*{Relation to existing work.}
The term \textit{graphical design} was suggested in~\cite{steinerberger2018generalized} in analogy to spherical designs. Let $S^d$ be the unit sphere in $\mathbb{R}^{d+1}$ with the Lebesgue measure $\mu_d$ normalized by $\mu_d(S^d)=1$. A set of points $x_1,\dots,x_N\in S^d$ is called a spherical $t$-design if
\[
 \int_{S^d} P(x) d\mu_d = \frac{1}{N}\sum_{i=1}^N P(x_i)
\]
for all algebraic polynomials in $(d + 1)$ variables, of total degree at most t. This notion was introduced by Delsarte, Goethals, and Seidel in~\cite{delsarte1991spherical}, where they proved a lower bound on the cardinality of a spherical $t$-design in terms of $d$ and $t$. Spherical designs were however studied earlier, as for example, in~\cite{mclaren1963optimal}, McLaren constructed a set of $72$ points on $S^2$ integrating $225$ polynomials exactly. Among recent results is the work of Bondarenko, Radchenko and Viazovska,~\cite{bondarenko2013optimal}, where they prove asymptotic optimality of the lower bound on the size of a design. This result was later extended to manifolds in~\cite{gariboldi2018optimal}. Both spherical and graphical designs deal with the question of sampling: how to efficiently sample a domain in such way that many functions integrate exactly on the sample. Spectral approach to sampling in graphs is taken, for example, in~\cite{pesenson2008sampling} by Pesenson. In this work, Pesenson introduces a notion of a Paley-Wiener space for a graph and proves analogues of the classical results of Paley-Wiener theory for certain families of graphs. In particular, he proves that a for every function on a subset of vertices of a graph, there exists a \textit{low-frequency} function on the whole graph that coincides with the given function on the subset. A function is called low-frequency if it is a linear combination of Laplace eigenfunctions with small eigenvalues. Other approaches to spectral to sampling are presented in a survey on Graph Signal Processing~\cite[Chapter III.E]{shuman2013emerging}. However, none of the mention techniques seem to be have an immediate connection with the notion of graphical designs. Thus it is probably not too bold to claim that graphical designs provide a new view on the question of graph sampling.

\subsubsection*{Structure of the paper.} The following section, Section~\ref{sec:main}, is devoted to the main results. In Section~\ref{sec:examples}, we present examples of families of graphs and graphical designs on them. Section~\ref{sec:proofs} contains proofs of the results. In Section~\ref{sec:weak-prod}, we prove a result on graphical designs in the weak product of graphs. We conclude with open questions in Section~\ref{sec:open-q}.

\section{Main Results}\label{sec:main}
In this section we prove main results of this paper, connecting the question of graphical designs to independent sets in Subsection~\ref{subsec:ind}, and to isoperimetric partitions in Subsection~\ref{subsec:part}. In the following section, we apply these results to show that various families of graphs have extremal graphical designs. This solves positively an open question posed in ~\cite{steinerberger2018generalized}: there are natural families of graphs supporting high-quality designs. 

\subsection{Independent Sets}\label{subsec:ind}
A subset $S\subset V$ of vertices in $G$ is called independent if no two vertices in it are connected by an edge. The independence ratio $\alpha(G)$ of $G$ is defined as
\[
 \alpha(G) = \max \left\{\frac{|S|}{|V|}\mid S\subset V \text{ is independent}\right\}.
\]
The following spectral upper bound on the size of an independent set was proved by Hoffman in~\cite{hoffman1970eigenvalues} (see also~\cite{hoffman2003eigenvalues}). 
\begin{figure}[h!]\centering
 \begin{tikzpicture}
\renewcommand*{\VertexLineWidth}{1pt}
\renewcommand*{\EdgeLineWidth}{0.5pt}
\GraphInit[vstyle=Hasse]
 \tikzset{VertexStyle/.style = {
shape = circle,
fill = white,%
inner sep = 0pt,
outer sep = 0pt,
minimum size = 6pt,
draw}}

\Vertex[LabelOut=true,L=\hbox{$\left\{2, 3\right\}$},x=1.5782cm,y=2.3053cm]{v2}
\Vertex[LabelOut=true,L=\hbox{$\left\{2, 6\right\}$},x=0.0cm,y=2.4005cm]{v4}
\Vertex[LabelOut=true,L=\hbox{$\left\{2, 5\right\}$},x=3.9637cm,y=1.7574cm]{v5}
\Vertex[LabelOut=true,L=\hbox{$\left\{2, 4\right\}$},x=2.923cm,y=4.759cm]{v6}
\Vertex[LabelOut=true,L=\hbox{$\left\{5, 6\right\}$},x=1.3323cm,y=3.4999cm]{v8}
\Vertex[LabelOut=true,L=\hbox{$\left\{4, 6\right\}$},x=5.0cm,y=1.0096cm]{v9}
\Vertex[LabelOut=true,L=\hbox{$\left\{4, 5\right\}$},x=0.6805cm,y=4.3115cm]{v10}
\Vertex[LabelOut=true,L=\hbox{$\left\{3, 6\right\}$},x=1.9217cm,y=0.85cm]{v11}
\Vertex[LabelOut=true,L=\hbox{$\left\{3, 5\right\}$},x=3.1863cm,y=0.8197cm]{v12}
\Vertex[LabelOut=true,L=\hbox{$\left\{3, 4\right\}$},x=4.1988cm,y=3.8149cm]{v13}

 \tikzset{VertexStyle/.style = {
shape = circle,
fill = black,%
inner sep = 0pt,
outer sep = 0pt,
minimum size = 6pt,
draw}}

\Vertex[LabelOut=true,L=\hbox{$\left\{1, 6\right\}$},x=4.0577cm,y=4.8374cm]{v0}
\Vertex[LabelOut=true,L=\hbox{$\left\{1, 5\right\}$},x=2.9079cm,y=2.2619cm]{v1}
\Vertex[LabelOut=true,L=\hbox{$\left\{1, 4\right\}$},x=0.3814cm,y=0.0cm]{v3}
\Vertex[LabelOut=true,L=\hbox{$\left\{1, 2\right\}$},x=4.948cm,y=2.5213cm]{v14}
\Vertex[LabelOut=true,L=\hbox{$\left\{1, 3\right\}$},x=1.7599cm,y=5.0cm]{v7}
\Edge(v2)(v0)
\Edge(v2)(v1)
\Edge(v3)(v2)
\Edge(v4)(v1)
\Edge(v4)(v3)
\Edge(v5)(v0)
\Edge(v5)(v3)
\Edge(v6)(v0)
\Edge(v6)(v1)
\Edge(v7)(v4)
\Edge(v7)(v5)
\Edge(v7)(v6)
\Edge(v8)(v2)
\Edge(v8)(v3)
\Edge(v8)(v6)
\Edge(v8)(v7)
\Edge(v9)(v1)
\Edge(v9)(v2)
\Edge(v9)(v5)
\Edge(v9)(v7)
\Edge(v10)(v0)
\Edge(v10)(v2)
\Edge(v10)(v4)
\Edge(v10)(v7)
\Edge(v11)(v1)
\Edge(v11)(v3)
\Edge(v11)(v5)
\Edge(v11)(v6)
\Edge(v11)(v10)
\Edge(v12)(v0)
\Edge(v12)(v3)
\Edge(v12)(v4)
\Edge(v12)(v6)
\Edge(v12)(v9)
\Edge(v13)(v0)
\Edge(v13)(v1)
\Edge(v13)(v4)
\Edge(v13)(v5)
\Edge(v13)(v8)
\Edge(v14)(v8)
\Edge(v14)(v9)
\Edge(v14)(v10)
\Edge(v14)(v11)
\Edge(v14)(v12)
\Edge(v14)(v13)
\end{tikzpicture}
\caption{The Kneser $(6,2)$ Graph: it is a graph on the set of all $2$-subsets of a $6$-set, where two subsets are connected by an edge if they are disjoint. It is $6$-regular and has $15$ vertices. There exists an orthonormal basis of eigenfunctions such that all but two of them have mean value zero on the subset of $5$ vertices in black. In other words, these vertices forms an extremal graphical design.}
\end{figure}
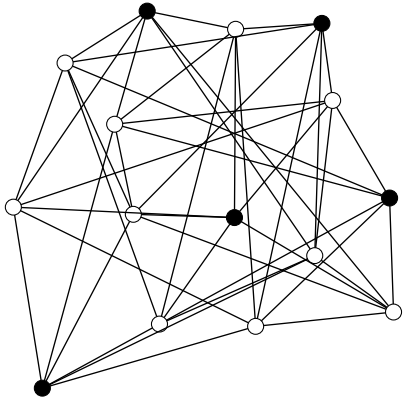
\begin{theorem}[Hoffman, \cite{hoffman1970eigenvalues}]\label{thm:hoffman}
 Let $G$ be a $d$-regular graph on $n$ vertices. The following bound holds
 \begin{equation} 
  \alpha(G)\leq \frac{-\lambda_n}{1 -\lambda_n}, 
 \end{equation}
where $\lambda_n$ is smallest eigenvalue of the normalized adjacency operator $A$ of $G$.
\end{theorem}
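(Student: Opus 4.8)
The plan is to run the classical spectral (ratio-bound) argument on the characteristic function of a maximum independent set. Let $S\subset V$ be independent with $|S|=s$, and let $\varphi_1,\dots,\varphi_n$ be an orthonormal eigenbasis of $A$ with $A\varphi_i=\lambda_i\varphi_i$; since $G$ is connected and $d$-regular, $\varphi_1=n^{-1/2}\mathbf 1$. Expanding $1_S=\sum_i c_i\varphi_i$, I would first record the two elementary facts that drive the estimate: on the one hand $c_1=\langle 1_S,\varphi_1\rangle = s/\sqrt n$ and $\sum_i c_i^2=\|1_S\|^2=s$; on the other hand, because $S$ spans no edge,
\[
\langle A\,1_S,1_S\rangle=\frac1d\sum_{v\in S}\sum_{u\sim v}1_S(u)=0 .
\]

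Next I would feed the eigenexpansion into this vanishing quadratic form. Writing $\langle A\,1_S,1_S\rangle=\sum_i\lambda_i c_i^2$, isolating the top term $\lambda_1=1$, and bounding every remaining coefficient by $\lambda_i\ge\lambda_n$ (legitimate since $c_i^2\ge 0$), we obtain
\[
0=\frac{s^2}{n}+\sum_{i\ge 2}\lambda_i c_i^2\ \ge\ \frac{s^2}{n}+\lambda_n\Big(s-\frac{s^2}{n}\Big),
\]
where $s-s^2/n=\sum_{i\ge 2}c_i^2\ge 0$. Equivalently, $\langle A\,1_S,1_S\rangle\ge \lambda_n\|1_S\|^2+(1-\lambda_n)\langle 1_S,\varphi_1\rangle^2$, which is the shape of the inequality I would actually quote. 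Dividing the displayed inequality by $s>0$ and rearranging gives $\tfrac{s}{n}(1-\lambda_n)\le-\lambda_n$. Since $\mathrm{tr}(A)=0$ forces $\sum_{i\ge 2}\lambda_i=-1$, hence $\lambda_n<0$ and $1-\lambda_n>0$, I may divide to conclude $s/n\le -\lambda_n/(1-\lambda_n)$. Taking the maximum over all independent $S$ yields the claimed bound on $\alpha(G)$.

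There is no genuine obstacle here; the argument is short and self-contained. The only points that need a word of care are the direction of the inequality when replacing $\lambda_i$ by $\lambda_n$ (valid precisely because the squared coefficients $c_i^2$ are nonnegative) and the verification that $1-\lambda_n>0$ before the final division, which I would justify from $\operatorname{tr}(A)=\sum_i\lambda_i=0$ together with $\lambda_1=1$. The structure of the proof — evaluate a known-to-vanish quadratic form, split off the Perron eigenvalue, and bound the rest by the spectral gap at the bottom — is exactly the template we will reuse when relating extremal independent sets to extremal graphical designs.
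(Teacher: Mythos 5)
Your proposal is correct and follows essentially the same route as the paper: evaluate the vanishing quadratic form $\langle A\,1_S,1_S\rangle=0$ for an independent set, expand $1_S$ in an orthonormal eigenbasis, split off the Perron term $c_1^2=|S|^2/n$, and bound the remainder below by $\lambda_n\sum_{i\ge 2}c_i^2$. The only addition is your explicit trace argument that $\lambda_n<0$ (so that dividing by $1-\lambda_n$ is legitimate), a point the paper leaves implicit.
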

The following result connects independent sets with extremal graphical designs.

\begin{theorem}[Main Result]\label{thm:ext-hoffman}
 Let $G$ be a regular graph for which the Hoffman bound is sharp. Let $S\subset V$ be an independent set realizing $\alpha(G)$, i.e.,
 \begin{equation*} 
  \frac{|S|}{|V|}= \alpha(G) = \frac{-\lambda_n}{1-\lambda_n},
 \end{equation*}
then $S$ is an extremal graphical design.
\end{theorem}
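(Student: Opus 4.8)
The plan is to revisit the standard spectral proof of the Hoffman bound (Theorem~\ref{thm:hoffman}) and read off extra information from the equality case. Fix an orthonormal basis $\varphi_1,\dots,\varphi_n$ of eigenfunctions of $A$ with $\varphi_1 = n^{-1/2}\mathbf 1$ the normalized constant function, and expand $1_S = \sum_{i=1}^n c_i\varphi_i$. Then $c_1 = \langle 1_S,\varphi_1\rangle = |S|/\sqrt n$ and $\sum_i c_i^2 = \|1_S\|^2 = |S|$. Since $S$ is independent there are no edges inside $S$, so
\[
\langle A1_S,1_S\rangle = \frac1d\,\#\{(u,v) : u\sim v,\ u,v\in S\} = 0 ;
\]
on the other hand $\langle A1_S,1_S\rangle = \sum_i \lambda_i c_i^2$.

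First I would recover Hoffman's inequality from these two facts. Using $\lambda_1 = 1$ and $\lambda_i \geq \lambda_n$ for $i\geq 2$,
\[
0 = \sum_i \lambda_i c_i^2 = c_1^2 + \sum_{i\geq 2}\lambda_i c_i^2 \ \geq\ c_1^2 + \lambda_n\sum_{i\geq 2} c_i^2 = c_1^2(1-\lambda_n) + \lambda_n |S|,
\]
which rearranges to $|S|/n \leq -\lambda_n/(1-\lambda_n)$; in particular $\lambda_n < 0$, since $S$ is nonempty (any single vertex being independent forces $\alpha(G) > 0$). Now the hypothesis that $S$ realizes $\alpha(G)$ and that the bound is sharp forces equality throughout the chain above, in particular in the one inequality $\sum_{i\geq 2}\lambda_i c_i^2 \geq \lambda_n\sum_{i\geq 2} c_i^2$. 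Because $\lambda_i \geq \lambda_n$, equality here is possible only if $c_i = 0$ for every $i\geq 2$ with $\lambda_i > \lambda_n$. Hence $1_S = c_1\varphi_1 + h$, where $h := \sum_{i:\,\lambda_i = \lambda_n} c_i\varphi_i$ lies in the $\lambda_n$-eigenspace of $A$.

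It remains to interpret this conclusion. Since $S$ is a proper nonempty subset, $1_S$ is not a constant function, so $h \neq 0$; thus $h$ is a nonzero eigenfunction of $A$, and $1_S$ is a linear combination of the constant function and exactly one other eigenfunction. By the definition of order, $S$ is a graphical design of order at most $1$, and the order is not $0$ because $1_S$ is nonconstant; hence $S$ is an extremal graphical design.

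I do not anticipate a serious obstacle: the whole argument is the equality analysis of a single Rayleigh-type estimate. The only points demanding care are (i) observing that a nonzero vector in the $\lambda_n$-eigenspace counts as \emph{one} eigenfunction irrespective of the multiplicity of $\lambda_n$, so that the correct conclusion is order $1$ rather than order equal to $\dim$ of that eigenspace, and (ii) checking $h\neq 0$ so that the order is genuinely $1$ and not $0$.
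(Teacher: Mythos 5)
Your proof is correct and follows essentially the same route as the paper: it reruns the spectral proof of the Hoffman bound, extracts from the equality case that $c_i=0$ whenever $\lambda_i>\lambda_n$ (for $i\geq 2$), and concludes that $1_S$ is the constant function plus a single eigenfunction in the $\lambda_n$-eigenspace. If anything, you are slightly more careful than the paper in explicitly verifying that the residual eigenvector $h$ is nonzero, so that the order is exactly $1$.
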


In~\cite{steinerberger2018generalized}, the main theorem implies that the neighborhood of a graphical design has to grow exponentially in the number of steps from it. For maximal independent sets, by definition, every vertex of the graph is either in the set or at distance $1$ from it. 

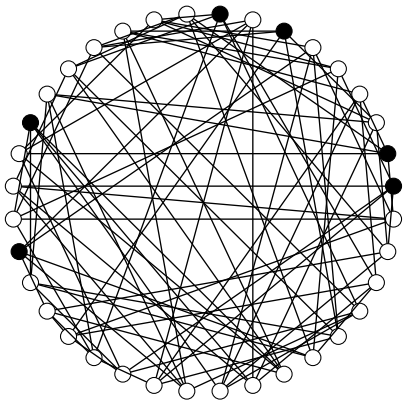
\begin{figure}[h!] \centering
\begin{tikzpicture}
\renewcommand*{\VertexLineWidth}{1pt}
\renewcommand*{\EdgeLineWidth}{0.5pt}
\GraphInit[vstyle=Hasse]
 \tikzset{VertexStyle/.style = {
shape = circle,
fill = white,%
inner sep = 0pt,
outer sep = 0pt,
minimum size = 6pt,
draw}}

\Vertex[LabelOut=false,L=\hbox{$2$},x=4.7744cm,y=3.5606cm]{v2}
\Vertex[LabelOut=false,L=\hbox{$3$},x=3.9394cm,y=0.4443cm]{v3}
\Vertex[LabelOut=false,L=\hbox{$4$},x=4.5557cm,y=3.9394cm]{v4}
\Vertex[LabelOut=false,L=\hbox{$5$},x=4.2745cm,y=4.2745cm]{v5}
\Vertex[LabelOut=false,L=\hbox{$6$},x=0.7255cm,y=4.2745cm]{v6}
\Vertex[LabelOut=false,L=\hbox{$7$},x=0.2256cm,y=1.4394cm]{v7}
\Vertex[LabelOut=false,L=\hbox{$8$},x=4.924cm,y=1.8505cm]{v8}
\Vertex[LabelOut=false,L=\hbox{$9$},x=3.9394cm,y=4.5557cm]{v9}
\Vertex[LabelOut=false,L=\hbox{$10$},x=0.0cm,y=2.7187cm]{v10}
\Vertex[LabelOut=false,L=\hbox{$11$},x=3.1495cm,y=0.076cm]{v11}
\Vertex[LabelOut=false,L=\hbox{$12$},x=1.8505cm,y=0.076cm]{v12}
\Vertex[LabelOut=false,L=\hbox{$13$},x=2.2813cm,y=0.0cm]{v13}
\Vertex[LabelOut=false,L=\hbox{$14$},x=0.076cm,y=3.1495cm]{v14}
\Vertex[LabelOut=false,L=\hbox{$17$},x=4.7744cm,y=1.4394cm]{v17}
\Vertex[LabelOut=false,L=\hbox{$18$},x=2.2813cm,y=5.0cm]{v18}
\Vertex[LabelOut=false,L=\hbox{$19$},x=1.4394cm,y=0.2256cm]{v19}
\Vertex[LabelOut=false,L=\hbox{$20$},x=1.8505cm,y=4.924cm]{v20}
\Vertex[LabelOut=false,L=\hbox{$21$},x=2.7187cm,y=0.0cm]{v21}
\Vertex[LabelOut=false,L=\hbox{$22$},x=1.0606cm,y=4.5557cm]{v22}
\Vertex[LabelOut=false,L=\hbox{$23$},x=0.7255cm,y=0.7255cm]{v23}
\Vertex[LabelOut=false,L=\hbox{$24$},x=0.4443cm,y=1.0606cm]{v24}
\Vertex[LabelOut=false,L=\hbox{$25$},x=4.5557cm,y=1.0606cm]{v25}
\Vertex[LabelOut=false,L=\hbox{$26$},x=1.0606cm,y=0.4443cm]{v26}
\Vertex[LabelOut=false,L=\hbox{$27$},x=4.2745cm,y=0.7255cm]{v27}
\Vertex[LabelOut=false,L=\hbox{$28$},x=0.0cm,y=2.2813cm]{v28}
\Vertex[LabelOut=false,L=\hbox{$30$},x=1.4394cm,y=4.7744cm]{v30}
\Vertex[LabelOut=false,L=\hbox{$31$},x=3.5606cm,y=0.2256cm]{v31}
\Vertex[LabelOut=false,L=\hbox{$33$},x=0.4443cm,y=3.9394cm]{v33}
\Vertex[LabelOut=false,L=\hbox{$34$},x=5.0cm,y=2.2813cm]{v34}
\Vertex[LabelOut=false,L=\hbox{$35$},x=3.1495cm,y=4.924cm]{v35}
 \tikzset{VertexStyle/.style = {
shape = circle,
fill = black,%
inner sep = 0pt,
outer sep = 0pt,
minimum size = 6pt,
draw}}
\Vertex[LabelOut=false,L=\hbox{$0$},x=5.0cm,y=2.7187cm]{v0}
\Vertex[LabelOut=false,L=\hbox{$1$},x=4.924cm,y=3.1495cm]{v1}
\Vertex[LabelOut=false,L=\hbox{$15$},x=2.7187cm,y=5.0cm]{v15}
\Vertex[LabelOut=false,L=\hbox{$16$},x=3.5606cm,y=4.7744cm]{v16} 
\Vertex[LabelOut=false,L=\hbox{$29$},x=0.076cm,y=1.8505cm]{v29}
\Vertex[LabelOut=false,L=\hbox{$32$},x=0.2256cm,y=3.5606cm]{v32}
\Edge(v0)(v5)
\Edge(v0)(v8)
\Edge(v0)(v10)
\Edge(v0)(v12)
\Edge(v0)(v19)
\Edge(v1)(v4)
\Edge(v1)(v14)
\Edge(v1)(v18)
\Edge(v1)(v33)
\Edge(v1)(v34)
\Edge(v2)(v8)
\Edge(v2)(v11)
\Edge(v2)(v16)
\Edge(v2)(v20)
\Edge(v2)(v33)
\Edge(v3)(v5)
\Edge(v3)(v7)
\Edge(v3)(v17)
\Edge(v3)(v18)
\Edge(v3)(v29)
\Edge(v4)(v12)
\Edge(v4)(v17)
\Edge(v4)(v21)
\Edge(v4)(v30)
\Edge(v5)(v20)
\Edge(v5)(v28)
\Edge(v5)(v30)
\Edge(v6)(v10)
\Edge(v6)(v15)
\Edge(v6)(v18)
\Edge(v6)(v27)
\Edge(v6)(v31)
\Edge(v7)(v19)
\Edge(v7)(v27)
\Edge(v7)(v32)
\Edge(v7)(v33)
\Edge(v8)(v18)
\Edge(v8)(v21)
\Edge(v8)(v24)
\Edge(v9)(v20)
\Edge(v9)(v21)
\Edge(v9)(v27)
\Edge(v9)(v29)
\Edge(v9)(v34)
\Edge(v10)(v11)
\Edge(v10)(v13)
\Edge(v10)(v34)
\Edge(v11)(v17)
\Edge(v11)(v32)
\Edge(v11)(v35)
\Edge(v12)(v26)
\Edge(v12)(v27)
\Edge(v12)(v35)
\Edge(v13)(v25)
\Edge(v13)(v29)
\Edge(v13)(v30)
\Edge(v13)(v33)
\Edge(v14)(v19)
\Edge(v14)(v20)
\Edge(v14)(v31)
\Edge(v14)(v35)
\Edge(v15)(v17)
\Edge(v15)(v20)
\Edge(v15)(v25)
\Edge(v15)(v26)
\Edge(v16)(v22)
\Edge(v16)(v23)
\Edge(v16)(v27)
\Edge(v16)(v30)
\Edge(v17)(v23)
\Edge(v18)(v22)
\Edge(v19)(v23)
\Edge(v19)(v25)
\Edge(v21)(v25)
\Edge(v21)(v32)
\Edge(v22)(v25)
\Edge(v22)(v28)
\Edge(v22)(v35)
\Edge(v23)(v24)
\Edge(v23)(v34)
\Edge(v24)(v26)
\Edge(v24)(v29)
\Edge(v24)(v31)
\Edge(v26)(v28)
\Edge(v26)(v33)
\Edge(v28)(v32)
\Edge(v28)(v34)
\Edge(v29)(v35)
\Edge(v30)(v31)
\Edge(v31)(v32)
\end{tikzpicture}
\caption{The Sylvester Graph: it is $5$-regular and has $36$ vertices, the subset of $6$ vertices in black forms a maximal independent set and a graphical design of order at most 9. In other words, there exists an orthonormal basis of eigenfunctions such that at least $26$ of the non-constant eigenfunctions have mean value zero on the subset of black vertices.} 
\end{figure}

Notably, almost all explicit examples of designs in~\cite{steinerberger2015sharp} have the property of being at distance at most $1$ from every other vertex in the graph, however only one, the one for the Sylvester graph, is a maximal independent set. A computer experiment in Sage on the Sylvester graph shows that there exist maximal independent sets of bigger size (namely, $11$ vertices), however their order as of a graphical design seems to be higher. It cannot be translated directly into precise statement, as the experiment included a choice of basis of eigenfunctions, which is not unique due to the high multiplicity of eigenvalues.

\subsection{Isoperimetric Partitions}\label{subsec:part}
In~\cite{cheeger1969lower}, Cheeger proved a lower bound on the second smallest eigenvalue of the Laplacian of a Riemannian manifold in terms of its isoperimetric constant, which now sometimes is called the Cheeger constant. In~\cite{buser1982note}, Buser proved an upper bound. Later, an analog of the Buser inequality for graphs was proved independently in~\cite{dodziuk1984difference, dodziuk150combinatorial, alon1986eigenvalues}, and an analog of the Cheeger inequality for graphs was proved independently in~\cite{alon1985lambda1, tanner1984explicit}. Usually the Cheeger constant of a graph is defined as
\[
 h'(G) = \min_{\emptyset \neq S\subsetneq V}\frac{|E(S,V\setminus S)|}{\min\left\{|S|,|V\setminus S|\right\}},
\]
where $E(S,V\setminus S)$ stands for the set of edges with one vertex in $S$ and the other one in the complement $ V\setminus S$.
We stick to a slightly different definition
\[
 h(G) = \min_{\emptyset \neq S\subsetneq V}  \frac{|V||E(S,V\setminus S)|}{d|S||V\setminus S|}.
\]
However, these two constants can be approximated one by another. Namely,
\[
 h'(G) \leq d\cdot h(G) \leq 2 h'(G).
\]

A similar modification to the isoperimetric constant is made for Euclidean domains in~\cite{steinerberger2015sharp}, and for simplicial complexes in~\cite{parzanchevski2016isoperimetric}.
We make use of the following theorem, which is a slight variation of the Cheeger inequality for graphs.

\begin{theorem}[Alon-Milman, Tanner \cite{alon1985lambda1, tanner1984explicit}]\label{thm:cheeger}
 Let $G$ be a connected $d$-regular graph, then
 \[
  1-\lambda_2 \leq h(G).
 \]
\end{theorem}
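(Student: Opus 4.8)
The plan is to run the ``easy'' half of the Cheeger-type inequality: feed the characteristic function of an isoperimetrically optimal set into the variational characterization of $\lambda_2$.

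First I would record the min-max formula
\[
 1 - \lambda_2 \;=\; \min_{\substack{f\in\mathbb{R}^V\setminus\{0\}\\ \langle f, 1_V\rangle = 0}} \frac{\langle (I-A)f, f\rangle}{\langle f, f\rangle},
\]
valid because $A$ is symmetric with largest eigenvalue $\lambda_1 = 1$ whose eigenspace is spanned by the constant function $1_V$, so that $I-A$ restricted to $1_V^\perp$ has smallest eigenvalue $1-\lambda_2$. Next I would rewrite the Dirichlet form: expanding the square and using $d$-regularity (each vertex contributes $\deg(v)f(v)^2 = d\,f(v)^2$ to the diagonal part while the cross terms reassemble $\langle Af,f\rangle$), one gets, for every $f\in\mathbb{R}^V$,
\[
 \langle (I-A)f, f\rangle \;=\; \frac{1}{d}\sum_{\{u,v\}\in E}\bigl(f(u)-f(v)\bigr)^2 .
\]

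Now let $S$ be a set attaining the minimum in the definition of $h(G)$ and set $f = 1_S - \frac{|S|}{|V|}\,1_V$, which lies in $1_V^\perp$ by construction. A direct computation gives $\langle f,f\rangle = |S|\bigl(1-\tfrac{|S|}{|V|}\bigr) = \frac{|S|\,|V\setminus S|}{|V|}$. In the Dirichlet form the only nonzero summands come from edges in $E(S,V\setminus S)$, each contributing $(\pm 1)^2 = 1$, so $\langle (I-A)f,f\rangle = \frac{1}{d}\,|E(S,V\setminus S)|$. Substituting into the variational formula yields
\[
 1-\lambda_2 \;\leq\; \frac{|E(S,V\setminus S)|/d}{|S|\,|V\setminus S|/|V|} \;=\; \frac{|V|\,|E(S,V\setminus S)|}{d\,|S|\,|V\setminus S|} \;=\; h(G),
\]
which is exactly the asserted bound.

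I do not expect a real obstacle: the entire content is the choice of test function and the bookkeeping with the normalizations in the definitions of $A$ and of $h(G)$. The only two points requiring a moment's care are that $f$ must be orthogonal to the constants (hence the subtraction of $|S|/|V|\cdot 1_V$, which is what produces the factor $|S||V\setminus S|/|V|$ rather than $|S|$), and that the denominator $|S|\,|V\setminus S|$ in $h(G)$ is already symmetric under $S\leftrightarrow V\setminus S$, so no case distinction on whether $|S|\leq|V|/2$ is needed.
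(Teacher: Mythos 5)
Your proof is correct, and it takes a somewhat different route from the paper's. The paper proves the bound by expanding $1_S$ and $1_T=1_{V\setminus S}$ in an orthonormal eigenbasis of $A$, computing $\tfrac{1}{d}|E(S,T)|=\langle A1_S,1_T\rangle=\sum_i \lambda_i c_i(\sqrt{n}\,\delta_{i1}-c_i)$, and bounding the non-constant terms by $\lambda_2$; you instead invoke the Courant--Fischer characterization $1-\lambda_2=\min_{f\perp 1_V}\langle (I-A)f,f\rangle/\langle f,f\rangle$ and plug in the mean-zero test function $f=1_S-\tfrac{|S|}{|V|}1_V$. The two arguments are of course two presentations of the same underlying computation (your variational formula is itself proved by the eigenbasis expansion), and your bookkeeping checks out: $\langle f,f\rangle=|S||V\setminus S|/|V|$, the Dirichlet identity $\langle (I-A)f,f\rangle=\tfrac{1}{d}\sum_{\{u,v\}\in E}(f(u)-f(v))^2$ uses $d$-regularity exactly as you say, and only boundary edges contribute. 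What the paper's more pedestrian form buys is explicit control of the equality case: the chain of inequalities is reused verbatim in the proof of Theorem~\ref{thm:ext-cheeger} to conclude that a Cheeger-optimal $S$ has $1_S$ supported on $\varphi_1$ and the $\lambda_2$-eigenspace. Your route in fact delivers that corollary at least as cleanly, since equality in the Rayleigh quotient forces $f=1_S-\tfrac{|S|}{|V|}1_V$ to be a $\lambda_2$-eigenfunction, which is precisely the statement that $S$ is an extremal design; it would be worth your noting this explicitly if the argument is to replace the paper's.
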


Similarly to the case of the Hoffman theorem, we can deduce a result on extremal graphical designs from the above theorem.
\begin{theorem}\label{thm:ext-cheeger}
Let $G$ be a regular graph for which the Cheeger bound is sharp. Let $S\subset V$ be a set realizing $h(G)$, i.e.,  \begin{equation*} 
  \frac{|V||E(S,V\setminus S)|}{d|S||V\setminus S|} = 1-\lambda_2.
 \end{equation*}
 Then it is an extremal graphical design.
\end{theorem}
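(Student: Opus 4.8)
The plan is to mimic the projection argument behind the Cheeger inequality and read off its equality case. Set $c = |S|/|V|$ and consider the function $f = 1_S - c\,\mathbf{1}$, where $\mathbf{1}$ denotes the constant function equal to $1$; by construction $\langle f,\mathbf{1}\rangle = 0$, and $f\neq 0$ because $S$ is a proper nonempty subset of $V$ (the minimum defining $h(G)$ ranges over such $S$). Since $G$ is $d$-regular, a standard computation gives the Dirichlet-energy identity
\[
\langle (I-A)f, f\rangle \;=\; \frac{1}{d}\sum_{\{u,v\}\in E}\bigl(f(u)-f(v)\bigr)^2 ,
\]
and since $f(u)-f(v) = \pm 1$ exactly on the edges in $E(S, V\setminus S)$ and vanishes on all other edges, the right-hand side equals $|E(S, V\setminus S)|/d$. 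A direct computation also yields $\langle f, f\rangle = c(1-c)|V| = |S|\,|V\setminus S|/|V|$.

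Next I would invoke the variational characterization of $\lambda_2$: since $f\perp\mathbf{1}$ and $\mathbf{1}$ spans the eigenspace of eigenvalue $1$, we have $\langle (I-A)f, f\rangle \geq (1-\lambda_2)\langle f, f\rangle$, with equality if and only if $f$ lies in the span of the eigenfunctions of $A$ with eigenvalue $\lambda_2$. Combining this with the two identities above reproduces exactly the Alon--Milman--Tanner bound $\frac{|V|\,|E(S,V\setminus S)|}{d\,|S|\,|V\setminus S|} \geq 1-\lambda_2$ of Theorem~\ref{thm:cheeger}.

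Now the hypotheses enter. Because the Cheeger bound is sharp for $G$ and $S$ realizes $h(G)$ with $\frac{|V|\,|E(S,V\setminus S)|}{d\,|S|\,|V\setminus S|} = 1-\lambda_2$, the displayed inequality is an equality, hence so is the Rayleigh-quotient inequality above. Therefore $f$ belongs to the $\lambda_2$-eigenspace of $A$; being nonzero, it is itself an eigenfunction of $A$ with eigenvalue $\lambda_2$. Consequently $1_S = f + c\,\mathbf{1}$ is a linear combination of the constant function and the single eigenfunction $f$, so $1_S$ has order at most $1$; since $S$ is neither empty nor all of $V$, the function $1_S$ is nonconstant, so the order is exactly $1$, and $S$ is an extremal graphical design.

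As for difficulties, there is essentially no computational obstacle: the two identities are elementary consequences of $d$-regularity. The one point that deserves care is the equality case of the Rayleigh-quotient inequality — one must argue, using the spectral decomposition of $f$ (which already has zero component along eigenvalue $1$ since $f\perp\mathbf{1}$), that equality forces every component of $f$ along an eigenspace with eigenvalue strictly less than $\lambda_2$ to vanish, so that $f$ sits entirely inside the $\lambda_2$-eigenspace. This is the only step that uses more than bookkeeping, and it runs exactly parallel to the equality analysis in the proof of Theorem~\ref{thm:ext-hoffman} (there with $\lambda_n$ in place of $\lambda_2$).
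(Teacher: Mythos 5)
Your proof is correct and follows essentially the same route as the paper: both reduce to the equality case of the Alon--Milman--Tanner bound and conclude that the non-constant part of $1_S$ must lie entirely in the $\lambda_2$-eigenspace. The only cosmetic difference is that you phrase the bound via the Dirichlet energy $\langle (I-A)f,f\rangle$ of the centered function $f=1_S-c\,\mathbf{1}$ and the Rayleigh-quotient characterization of $\lambda_2$, whereas the paper expands $1_S$ and $1_T$ in an explicit eigenbasis and computes $\langle A1_S,1_T\rangle=|E(S,T)|/d$; these are the same computation in different notation.
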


\section{Examples}\label{sec:examples}

The Hoffman bound has become a classical tool in spectral graph theory, and has been generalized to other settings.

\subsection{Bipartite Graphs} An immediate example of graphs for which the Hoffman bound is sharp is bipartite graphs, i.e., graphs whose vertex set can be partitioned into two parts in such a way that every edge has exactly one vertex in each part. In this case, $\lambda_n=-1$ (consider a function which equal $1$ on one part, and $-1$ on the other), and the Hoffman bound is sharp for either of the parts of the partition. Therefore, each part of a bipartite graph makes an extremal graphical design.

\subsection{Kneser Graphs}  In extremal combinatorics, the Hoffman bound provided a method of proving results in the spirit of the Erd\H{o}s-Ko-Rado Theorem, including the original theorem. For an overview of this subject, see, e.g.,~\cite{godsil2016erdos}. The original Erd\H{o}s-Ko-Rado Theorem reads as follows.

\begin{theorem}[Erd\H{o}s-Ko-Rado,~\cite{erdos1961intersection}]
 Let $\mathcal{F}$ be a family of distinct subsets of size $k$ of $[n]=\{1,\dots, n\}$ such that each pair of subsets in the family has a non-empty intersection. Then
 \[
  |\mathcal{F}|\leq {n-1 \choose k-1}.
 \]
\end{theorem}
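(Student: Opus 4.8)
The plan is to deduce the theorem from the Hoffman bound (Theorem~\ref{thm:hoffman}) applied to the Kneser graph, in the spirit of the heading of this subsection. I will assume $n\ge 2k$; for $k\le n<2k$ every two $k$-subsets of $[n]$ already intersect and the asserted inequality fails, so $n\ge 2k$ is the standard hypothesis under which the statement holds.

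First I would translate the problem into spectral graph theory. Let $K(n,k)$ be the Kneser graph: its vertex set is the collection of $\binom{n}{k}$ subsets of $[n]$ of size $k$, and two such subsets are joined by an edge exactly when they are disjoint. It is regular of degree $d=\binom{n-k}{k}$, and a family $\mathcal{F}$ of $k$-subsets is intersecting if and only if it is an independent set of $K(n,k)$. Hence the theorem is equivalent to the estimate $\alpha(K(n,k))\le k/n$, and by Theorem~\ref{thm:hoffman} it suffices to locate the least eigenvalue $\lambda_n$ of the normalized adjacency operator of $K(n,k)$ and to check that $-\lambda_n/(1-\lambda_n)=k/n$.

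Next I would invoke the spectrum of the Kneser graph. Its (combinatorial) adjacency matrix has eigenvalues $(-1)^{j}\binom{n-k-j}{k-j}$ for $j=0,1,\dots,k$, a fact I would quote from the theory of the Johnson association scheme (see, e.g.,~\cite{godsil2016erdos}) or prove directly by a short inclusion-exclusion computation. When $n\ge 2k$ one has $\binom{m}{r}/\binom{m-2}{r-2}=m(m-1)/(r(r-1))\ge 1$ for $m=n-k-j\ge r=k-j$, so $\binom{n-k-j}{k-j}$ does not increase as $j$ runs through $1,3,5,\dots$; therefore the smallest eigenvalue is attained at $j=1$ and equals $-\binom{n-k-1}{k-1}$. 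Dividing by $d$ gives $\lambda_n=-\binom{n-k-1}{k-1}/\binom{n-k}{k}$. Writing $a=\binom{n-k-1}{k-1}$ and $d=\binom{n-k}{k}$, the Hoffman bound then reads $\alpha(K(n,k))\le a/(a+d)$, and the elementary identity $(n-k)\binom{n-k-1}{k-1}=k\binom{n-k}{k}$ — both sides equal $(n-k)!/((k-1)!\,(n-2k)!)$ — rearranges to $na=k(a+d)$, hence $a/(a+d)=k/n$. Thus $|\mathcal{F}|=\alpha(K(n,k))\binom{n}{k}\le\frac{k}{n}\binom{n}{k}=\binom{n-1}{k-1}$.

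The routine parts are the dictionary between intersecting families and independent sets and the binomial arithmetic; the only substantive ingredient is the spectral one — computing the least eigenvalue of $K(n,k)$ and, above all, verifying that the extremum over $j$ occurs at $j=1$, which is precisely what makes the Hoffman bound tight against the Erd\H{o}s--Ko--Rado count. A minor point to address is that for $n=2k$ the graph $K(2k,k)$ is a perfect matching and hence disconnected, but Theorem~\ref{thm:hoffman} assumes only regularity, so the argument is unaffected.
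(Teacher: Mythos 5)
Your proof is correct and is precisely the Lov\'asz spectral argument that the paper itself points to for this theorem (the paper only cites the result and sketches that it follows from the Hoffman bound applied to the Kneser graph $KG(n,k)$, whose least normalized eigenvalue $-\binom{n-k-1}{k-1}/\binom{n-k}{k}$ makes the bound come out to $k/n$). You are also right to flag the implicit hypothesis $n\ge 2k$, which the paper's statement omits.
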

An example of such a family of the maximal size is the family of all subsets of size $k$ containing a fixed element. In~\cite{lovasz1979shannon}, Lov{\'a}sz provided a spectral proof of Erd\H{o}s-Ko-Rado Theorem by considering the corresponding Kneser graph $KG(n,k)$: its vertices are all subsets of size $k$ of $[n]$, and two vertices are connected if the sets do not intersect. Then the desired families of subsets correspond to the independent sets of the Kneser graph. For this graph, the Hoffman bound is sharp. This implies in particular that a family of all subsets containing a fixed element forms an extremal graphical design in the Kneser graph.

\subsection{Derangement Graphs} A similar question for permutations was first addressed by Deza and Frankl in~\cite{frankl1977maximum}. Two permutations $\sigma,\tau\in S_n$ are called intersecting, if there exits $i\in[n] = \{1,\dots,n\}$ such that $\sigma(i) = \tau(i)$. A family of permutations is called intersecting, if any two permutations in it are intersecting.

\begin{theorem}[Deza-Frankl,~\cite{frankl1977maximum}]
 Let $I\subset S_n$ be an intersecting family of permutations, then 
 \[
  |I| \leq (n-1)!.
  \]
\end{theorem}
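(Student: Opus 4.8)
The plan is to reuse, for permutations, the spectral strategy just applied to the Kneser graph. Introduce the \emph{derangement graph} $\Gamma_n$: its vertex set is $S_n$, and two permutations $\sigma,\tau$ are joined by an edge precisely when $\sigma\tau^{-1}$ has no fixed point, i.e.\ $\sigma(i)\neq\tau(i)$ for every $i\in[n]$. With this definition an intersecting family of permutations is exactly an independent set of $\Gamma_n$, so the Deza--Frankl bound is equivalent to $\alpha(\Gamma_n)\cdot n!\leq (n-1)!$, and it suffices to identify the smallest eigenvalue of $\Gamma_n$ and invoke Hoffman's Theorem~\ref{thm:hoffman}. Observe that $\Gamma_n$ is $D_n$-regular, where $D_n$ is the number of derangements of $[n]$, and that it is a Cayley graph on $S_n$ whose connection set $\mathcal{D}_n$ (the set of derangements) is closed under inversion and under conjugation.

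Since $\mathcal{D}_n$ is a union of conjugacy classes, $\Gamma_n$ is a normal Cayley graph, so its eigenvalues are indexed by the irreducible representations of $S_n$, that is, by partitions $\lambda\vdash n$; the (normalized) eigenvalue attached to $\lambda$ is
\[
 \eta_\lambda \;=\; \frac{1}{f^\lambda D_n}\sum_{\sigma\in\mathcal{D}_n}\chi^\lambda(\sigma),
\]
where $\chi^\lambda$ is the irreducible character and $f^\lambda=\chi^\lambda(1)$ its degree. The trivial partition gives $\eta_{(n)}=1$. For the standard representation $\lambda=(n-1,1)$ one has $\chi^{(n-1,1)}(\sigma)=\mathrm{fix}(\sigma)-1$, and since no $\sigma\in\mathcal{D}_n$ has a fixed point this gives $\sum_{\sigma\in\mathcal{D}_n}\chi^{(n-1,1)}(\sigma)=-D_n$ and hence $\eta_{(n-1,1)}=-\tfrac{1}{n-1}$. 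The claim is that this is the least eigenvalue, i.e.\ $\lambda_n=-\tfrac{1}{n-1}$ for $\Gamma_n$.

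Granting this, Hoffman's bound yields
\[
 \alpha(\Gamma_n)\;\leq\;\frac{-\lambda_n}{1-\lambda_n}\;=\;\frac{1/(n-1)}{1+1/(n-1)}\;=\;\frac1n ,
\]
so $|I|\leq \alpha(\Gamma_n)\,|S_n| = n!/n = (n-1)!$, which is the theorem. Moreover the point stabilizer $\{\sigma\in S_n:\sigma(1)=1\}$ is an independent set of size exactly $(n-1)!$, so the Hoffman bound is sharp; by Theorem~\ref{thm:ext-hoffman} these "star" families are in fact extremal graphical designs in $\Gamma_n$.

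The main obstacle is the spectral input: proving $\eta_\lambda\geq -\tfrac{1}{n-1}$ for every $\lambda\neq(n)$, i.e.\ that the standard representation realizes the least eigenvalue of $\Gamma_n$. This is not formal; it requires genuine control of the values of irreducible $S_n$-characters on derangements, for instance via the branching/Murnaghan--Nakayama recursion combined with an inductive estimate, as carried out by Renteln and by Ku--Wales. I would either cite this as a black box or reproduce the inductive character bound; the remaining steps — the Cayley-graph spectral decomposition, the substitution into the Hoffman bound, and the matching lower-bound construction — are routine.
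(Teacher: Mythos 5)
Your outline is exactly the spectral argument the paper itself points to for this result: the paper does not prove the Deza--Frankl theorem but cites it and sketches precisely your route (derangement graph, Hoffman bound sharp at $\alpha=1/n$, stabilizer cosets as extremal independent sets). Your computation of $\eta_{(n-1,1)}=-1/(n-1)$ is correct, and you rightly flag that the only non-routine ingredient is showing this is the \emph{least} eigenvalue, which must be imported from Renteln, Ku--Wales, or Godsil--Meagher, just as the paper imports the whole statement from the literature.
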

\begin{figure}[h!]\centering
\begin{tikzpicture}
    \renewcommand*{\VertexLineWidth}{1pt}
    \renewcommand*{\EdgeLineWidth}{0.5pt}
    \GraphInit[vstyle=Hasse]
    \tikzset{VertexStyle/.style = {
    shape = circle,
    fill = white,%
    inner sep = 0pt,
    outer sep = 0pt,
    minimum size = 6pt,
    draw}}
    \Vertex[LabelOut=false,L=\hbox{$\left\{2, 3\right\}$},x=6.3514cm,y=3.4247cm]{v3}
    \Vertex[LabelOut=false,L=\hbox{$\left\{2, 7\right\}$},x=3.6066cm,y=1.2487cm]{v5}\Vertex[LabelOut=false,L=\hbox{$\left\{2, 6\right\}$},x=0.0cm,y=3.6809cm]{v6}
    \Vertex[LabelOut=false,L=\hbox{$\left\{2, 5\right\}$},x=1.5778cm,y=2.155cm]{v7}
    \Vertex[LabelOut=false,L=\hbox{$\left\{2, 4\right\}$},x=5.7707cm,y=4.3186cm]{v8}
    \Vertex[LabelOut=false,L=\hbox{$\left\{6, 7\right\}$},x=1.1925cm,y=4.7121cm]{v10}
    \Vertex[LabelOut=false,L=\hbox{$\left\{5, 7\right\}$},x=4.94cm,y=0.6508cm]{v11}
    \Vertex[LabelOut=false,L=\hbox{$\left\{5, 6\right\}$},x=5.1674cm,y=2.0436cm]{v12}
    \Vertex[LabelOut=false,L=\hbox{$\left\{4, 7\right\}$},x=7.0cm,y=4.9356cm]{v13}
    \Vertex[LabelOut=false,L=\hbox{$\left\{4, 6\right\}$},x=6.1221cm,y=1.0046cm]{v14}
    \Vertex[LabelOut=false,L=\hbox{$\left\{4, 5\right\}$},x=2.1552cm,y=6.1329cm]{v15}
    \Vertex[LabelOut=false,L=\hbox{$\left\{3, 7\right\}$},x=3.2729cm,y=5.9999cm]{v16}
    \Vertex[LabelOut=false,L=\hbox{$\left\{3, 6\right\}$},x=6.6199cm,y=2.1218cm]{v17}
    \Vertex[LabelOut=false,L=\hbox{$\left\{3, 5\right\}$},x=2.9325cm,y=4.0439cm]{v18}
    \Vertex[LabelOut=false,L=\hbox{$\left\{3, 4\right\}$},x=0.5465cm,y=2.7948cm]{v19}

    \tikzset{VertexStyle/.style = {
    shape = circle,
    fill = black,%
    inner sep = 0pt,
    outer sep = 0pt,
    minimum size = 6pt,
    draw}}
    \Vertex[LabelOut=false,L=\hbox{$\left\{1, 7\right\}$},x=3.4544cm,y=3.0009cm]{v0}
    \Vertex[LabelOut=false,L=\hbox{$\left\{1, 6\right\}$},x=4.7931cm,y=7.0cm]{v1}
    \Vertex[LabelOut=false,L=\hbox{$\left\{1, 5\right\}$},x=4.1349cm,y=4.7281cm]{v2}
    \Vertex[LabelOut=false,L=\hbox{$\left\{1, 4\right\}$},x=1.7328cm,y=0.2376cm]{v4}
    \Vertex[LabelOut=false,L=\hbox{$\left\{1, 3\right\}$},x=3.2144cm,y=0.0cm]{v9}
    \Vertex[LabelOut=false,L=\hbox{$\left\{1, 2\right\}$},x=5.5435cm,y=5.7147cm]{v20}
    \Edge(v3)(v0)
    \Edge(v3)(v1)
    \Edge(v3)(v2)
    \Edge(v4)(v3)
    \Edge(v5)(v1)
    \Edge(v5)(v2)
    \Edge(v5)(v4)
    \Edge(v6)(v0)
    \Edge(v6)(v2)
    \Edge(v6)(v4)
    \Edge(v7)(v0)
    \Edge(v7)(v1)
    \Edge(v7)(v4)
    \Edge(v8)(v0)
    \Edge(v8)(v1)
    \Edge(v8)(v2)
    \Edge(v9)(v5)
    \Edge(v9)(v6)
    \Edge(v9)(v7)
    \Edge(v9)(v8)
    \Edge(v10)(v7)
    \Edge(v10)(v8)
    \Edge(v10)(v9)
    \Edge(v10)(v2)
    \Edge(v10)(v3)
    \Edge(v10)(v4)
    \Edge(v11)(v6)
    \Edge(v11)(v8)
    \Edge(v11)(v9)
    \Edge(v11)(v1)
    \Edge(v11)(v3)
    \Edge(v11)(v4)
    \Edge(v12)(v5)
    \Edge(v12)(v8)
    \Edge(v12)(v9)
    \Edge(v12)(v0)
    \Edge(v12)(v3)
    \Edge(v12)(v4)
    \Edge(v13)(v6)
    \Edge(v13)(v7)
    \Edge(v13)(v9)
    \Edge(v13)(v12)
    \Edge(v13)(v1)
    \Edge(v13)(v2)
    \Edge(v13)(v3)
    \Edge(v14)(v5)
    \Edge(v14)(v7)
    \Edge(v14)(v9)
    \Edge(v14)(v11)
    \Edge(v14)(v0)
    \Edge(v14)(v2)
    \Edge(v14)(v3)
    \Edge(v15)(v5)
    \Edge(v15)(v6)
    \Edge(v15)(v9)
    \Edge(v15)(v10)
    \Edge(v15)(v0)
    \Edge(v15)(v1)
    \Edge(v15)(v3)
    \Edge(v16)(v6)
    \Edge(v16)(v7)
    \Edge(v16)(v8)
    \Edge(v16)(v12)
    \Edge(v16)(v14)
    \Edge(v16)(v15)
    \Edge(v16)(v1)
    \Edge(v16)(v2)
    \Edge(v16)(v4)
    \Edge(v17)(v5)
    \Edge(v17)(v7)
    \Edge(v17)(v8)
    \Edge(v17)(v11)
    \Edge(v17)(v13)
    \Edge(v17)(v15)
    \Edge(v17)(v0)
    \Edge(v17)(v2)
    \Edge(v17)(v4)
    \Edge(v18)(v5)
    \Edge(v18)(v6)
    \Edge(v18)(v8)
    \Edge(v18)(v10)
    \Edge(v18)(v13)
    \Edge(v18)(v14)
    \Edge(v18)(v0)
    \Edge(v18)(v1)
    \Edge(v18)(v4)
    \Edge(v19)(v5)
    \Edge(v19)(v6)
    \Edge(v19)(v7)
    \Edge(v19)(v10)
    \Edge(v19)(v11)
    \Edge(v19)(v12)
    \Edge(v19)(v0)
    \Edge(v19)(v1)
    \Edge(v19)(v2)
    \Edge(v20)(v10)
    \Edge(v20)(v11)
    \Edge(v20)(v12)
    \Edge(v20)(v13)
    \Edge(v20)(v14)
    \Edge(v20)(v15)
    \Edge(v20)(v16)
    \Edge(v20)(v17)
    \Edge(v20)(v18)
    \Edge(v20)(v19)
\end{tikzpicture}
\caption{The Kneser $(7,2)$ Graph: it is $10$-regular and has $21$ vertices. The subset of $6$ vertices in black is an independent set for which the Hoffman bound is sharp, hence it is also an extremal graphical design. That is, there exists an orthonormal basis of eigenfunctions such that all but two of them have mean value zero on the subset.}
\end{figure}
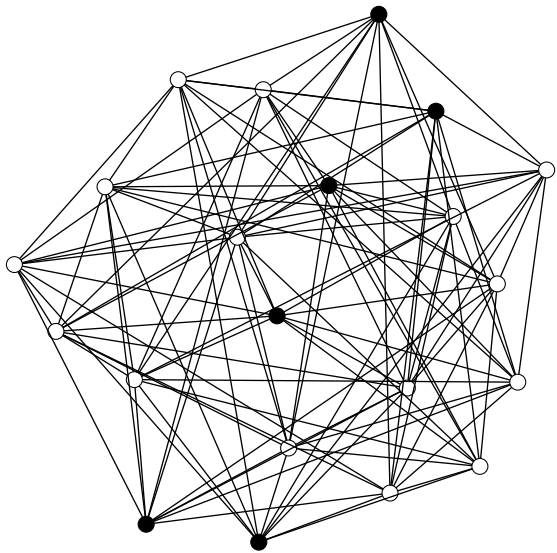

An example of an intersecting family of permutations of the maximal size is the family of permutations fixing a given element. A spectral proof appeared independently in~\cite{renteln2007spectrum},~\cite{friedgut2008intersecting} and~\cite{godsil2009new}. The idea is to consider the derangement graph on the symmetric group $S_n$: its vertices are all the permutations, and two permutations are connected by an edge if they are not intersecting. Then the independent sets of the derangement graph form intersecting families of permutations. The Hoffman bound is sharp for this graph, and therefore, the intersecting families of the maximal size also form extremal graphical designs in this graphs.

\subsection{The Complete Graph} 
Unlike the Hoffman bound, there are not many examples known of graphs for which the Cheeger bound is sharp. The complete graph $K_n$ is the graph on $[n]=\{1,\dots,n\}$, where every two vertices are connected by an edge. The eigenvalues of its normalized adjacency operator are
\[
 \underbrace{\frac{-1}{n-1},\dots,\frac{-1}{n-1}}_{n-1\text{ times}},1,
\]
while its Cheeger constant is equal to 
\[
h(K_n) = \min_{\emptyset \neq A\subsetneq [n]}\frac{|V||E(A,V\setminus A)|}{d |A| |V\setminus A|} = \min_{\emptyset \neq A\subsetneq [n]}\frac{|V||A||V\setminus A|}{d |A| |V\setminus A|} = \frac{n}{n-1},
\]
and the independence ratio is 
\[
 \alpha(K_n) = \frac{1}{n}.
\]
It implies that every proper subset of vertices in $K_n$ is an extremal graphical design.

\subsection{The Hypercube Graph} The hypercube graph $Q_n$ is a graph constructed as follows. Its vertex set is $\{0,1\}^n$, i.e., the set of all $0,1$-vectors of length $n$, where two vectors are connected by an edge iff they differ in exactly one coordinate. The weight of a vector is the sum of its coordinates. Another approach is to view the vertex set as the power set of $[n]=\{1,\dots,n\}$, where two sets are connected by an edge iff their symmetric difference consists of exactly one element. The weight of a vertex is then just the cardinality of the set.
In particular, $Q_n$ is $d$-regular with $d=n$. It is also bipartite, as it can be partitioned into two parts with the vertices with even and odd weights. 
\begin{figure}[h!]
 \centering
 \begin{tikzpicture}
    \renewcommand*{\VertexLineWidth}{1pt}
    \renewcommand*{\EdgeLineWidth}{0.5pt}
    \GraphInit[vstyle=Hasse]
    \tikzset{VertexStyle/.style = {
    shape = circle,
    fill = white,%
    inner sep = 0pt,
    outer sep = 0pt,
    minimum size = 6pt,
    draw}}
\Vertex[LabelOut=false,L=\hbox{$0000$},x=1.4645cm,y=0.0cm]{v0}
\Vertex[LabelOut=false,L=\hbox{$0001$},x=0.0cm,y=1.4645cm]{v1}
\Vertex[LabelOut=false,L=\hbox{$0010$},x=1.4645cm,y=2.0711cm]{v2}
\Vertex[LabelOut=false,L=\hbox{$0011$},x=0.0cm,y=3.5355cm]{v3}
\Vertex[LabelOut=false,L=\hbox{$0100$},x=2.9289cm,y=1.4645cm]{v4}
\Vertex[LabelOut=false,L=\hbox{$0101$},x=1.4645cm,y=2.9289cm]{v5}
\Vertex[LabelOut=false,L=\hbox{$0110$},x=2.9289cm,y=3.5355cm]{v6}
\Vertex[LabelOut=false,L=\hbox{$0111$},x=1.4645cm,y=5.0cm]{v7}

    \tikzset{VertexStyle/.style = {
    shape = circle,
    fill = black,%
    inner sep = 0pt,
    outer sep = 0pt,
    minimum size = 6pt,
    draw}}
\Vertex[LabelOut=false,L=\hbox{$1000$},x=3.5355cm,y=0.0cm]{v8}
\Vertex[LabelOut=false,L=\hbox{$1001$},x=2.0711cm,y=1.4645cm]{v9}
\Vertex[LabelOut=false,L=\hbox{$1010$},x=3.5355cm,y=2.0711cm]{v10}
\Vertex[LabelOut=false,L=\hbox{$1011$},x=2.0711cm,y=3.5355cm]{v11}
\Vertex[LabelOut=false,L=\hbox{$1100$},x=5.0cm,y=1.4645cm]{v12}
\Vertex[LabelOut=false,L=\hbox{$1101$},x=3.5355cm,y=2.9289cm]{v13}
\Vertex[LabelOut=false,L=\hbox{$1110$},x=5.0cm,y=3.5355cm]{v14}
\Vertex[LabelOut=false,L=\hbox{$1111$},x=3.5355cm,y=5.0cm]{v15}
\Edge(v0)(v1)
\Edge(v0)(v2)
\Edge(v0)(v4)
\Edge(v0)(v8)
\Edge(v1)(v3)
\Edge(v1)(v5)
\Edge(v1)(v9)
\Edge(v2)(v3)
\Edge(v2)(v6)
\Edge(v2)(v10)
\Edge(v3)(v7)
\Edge(v3)(v11)
\Edge(v4)(v5)
\Edge(v4)(v6)
\Edge(v4)(v12)
\Edge(v5)(v7)
\Edge(v5)(v13)
\Edge(v6)(v7)
\Edge(v6)(v14)
\Edge(v7)(v15)
\Edge(v8)(v9)
\Edge(v8)(v10)
\Edge(v8)(v12)
\Edge(v9)(v11)
\Edge(v9)(v13)
\Edge(v10)(v11)
\Edge(v10)(v14)
\Edge(v11)(v15)
\Edge(v12)(v13)
\Edge(v12)(v14)
\Edge(v13)(v15)
\Edge(v14)(v15)
\end{tikzpicture}
\caption{The Hypercube Graph $Q_4$: it is $4$-regular and has $16$ vertices (the power set of $[4]$). The subset of $8$ vertices in black achieves the Cheeger bound and forms an extremal graphical design: there exists an orthonormal basis of eigenfunctions such that all but two of them have mean value zero on the subset. This subset consists of all subsets of $[4]$ who contain a fixed element.}
\end{figure}
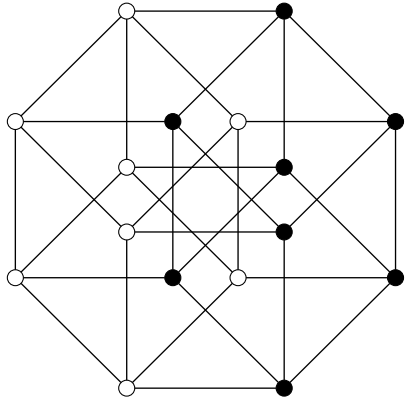

Both the eigenvalues and an orthogonal basis of eigenfunctions of the normalized adjacency operator can be written down explicitly (e.g., see~\cite{harary1988survey}). Namely, for each $I\subseteq [n]$, the following function
\[
 \chi_I (J) = (-1)^{|I\cap J|},
\]
where $J\subseteq [n]$, is an eigenfunction with eigenvalue $1-{2|I|}/{n}$. For $I_1\neq I_2$, the functions $\chi_{I_1}$ and $\chi_{I_2}$ are perpendicular. Hence, they form a basis of eigenfunctions. The Cheeger constant $h(Q_n)$ of the graph is achieved at the following partition
\[
 S = \{I\subseteq [n]\mid 1\in I\},\, T = \{I\subseteq [n]\mid 1\not\in I\}.
\]
Note that every vertex in $S$ has exactly one neighbor in $T$. Hence, for this partition, we have
\[
 \frac{|V||E(S,V\setminus S)|}{d |S| |V\setminus S|} = \frac{2^{n} 2^{n-1}}{n 2^{n-1} 2^{n-1}} = \frac{2}{n}.
\]
The Cheeger bound is sharp for this partition, as the second largest eigenvalue is equal to $1-{2}/{n}$. 

A bit more can be said about extremal graphical designs in the hypercube graphs. 

\begin{theorem}\label{thm:hypercube}
 Let $I\subsetneq [n]$ be a proper subset of $[n]$, then the sets
 \[
  S_I = \{J\subseteq [n]\mid |I\cap J| \text{ is odd}\},   T_I = \{J\subseteq [n]\mid |I\cap J| \text{ is even}\}
 \]
 are extremal graphical designs in $Q_n$.
\end{theorem}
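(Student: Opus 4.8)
The plan is to write the characteristic functions $1_{S_I}$ and $1_{T_I}$ explicitly in the eigenbasis $\{\chi_J\}_{J\subseteq[n]}$ and observe that each involves only the constant function $\chi_\emptyset$ together with the single eigenfunction $\chi_I$. First I would recall, from the discussion preceding the theorem, that for every $I\subseteq[n]$ the function $\chi_I(J)=(-1)^{|I\cap J|}$ is an eigenfunction of the normalized adjacency operator of $Q_n$ with eigenvalue $1-2|I|/n$, and that these functions are pairwise orthogonal. In particular $\chi_\emptyset$ is the constant function, and for $I\neq\emptyset$ the eigenvalue $1-2|I|/n<1$, so $\chi_I$ is a non-constant eigenfunction.

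The key (and essentially only) step is the identity relating $\chi_I$ to the sets in the statement. By definition $\chi_I(J)=1$ precisely when $|I\cap J|$ is even, i.e.\ when $J\in T_I$, and $\chi_I(J)=-1$ precisely when $|I\cap J|$ is odd, i.e.\ when $J\in S_I$. Since $S_I$ and $T_I$ partition the vertex set, this means $\chi_I = 1_{T_I}-1_{S_I}$, while trivially $\chi_\emptyset = 1_{S_I}+1_{T_I}$. Solving these two equations gives
\[
 1_{S_I} = \tfrac12\left(\chi_\emptyset - \chi_I\right), \qquad 1_{T_I} = \tfrac12\left(\chi_\emptyset + \chi_I\right).
\]

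It remains to read off the conclusion. For $I\neq\emptyset$ the displayed formulas exhibit $1_{S_I}$ and $1_{T_I}$ as linear combinations of the constant function $\chi_\emptyset$ and the single non-constant eigenfunction $\chi_I$, so each has order at most $1$. They are not of order $0$: since $I$ is a nonempty subset of $[n]$ one checks directly that $S_I$ and $T_I$ are both nonempty proper subsets of $V$ (for instance $\emptyset\in T_I$, while $\{j\}\in S_I$ for any $j\in I$), hence $1_{S_I}$ and $1_{T_I}$ are non-constant and cannot be multiples of $\chi_\emptyset$ alone. Therefore the order is exactly $1$ and both sets are extremal graphical designs. I do not expect any real obstacle here: the entire content is the identification $\chi_I = 1_{T_I}-1_{S_I}$, which is immediate from the formula for $\chi_I$; the only points requiring (routine) care are checking that $S_I,T_I$ are genuine nonempty proper subsets so that the order is $1$ rather than $0$, and noting that nonemptiness of $I$ is what makes $\chi_I$ non-constant. (The hypothesis $I\subsetneq[n]$ is not needed for the argument, which applies verbatim to $I=[n]$, recovering the bipartition example treated earlier.)
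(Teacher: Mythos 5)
Your proof is correct and follows essentially the same route as the paper: the entire argument is the identity $1_{S_I}=\tfrac12\left(1_V-\chi_I\right)$, which is exactly the paper's decomposition. The only additions are the (welcome but routine) check that the order is exactly $1$ rather than $0$, and the observation that nonemptiness of $I$ is what is really needed --- a point the paper's statement glosses over, since for $I=\emptyset$ the sets $S_I$ and $T_I$ degenerate to $\emptyset$ and $V$.
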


\section{Proofs}\label{sec:proofs}
The proofs of the Hoffman and Cheeger inequalities presented below can probably be considered folklore. We reproduce them here in order to track the case of equality exploited in the proofs of Theorems~\ref{thm:ext-hoffman} and~\ref{thm:ext-hoffman}.

\begin{proof}[Proof of~Theorem~\ref{thm:hoffman}.]
Let $1_S$ be the characteristic function of an independent set $S\subset V$, i.e. such that $\alpha(G) = |S|/|V|$. Since $S$ is independent, the following holds
\[
\langle A 1_S, 1_S \rangle = 0.
\]

As before, let $\lambda_n\leq \dots \leq \lambda_1$ denote the eigenvalues of $A$, and let $\f_n,\dots,\f_1$ be an orthonormal basis of eigenfunctions of $A$, where $\f_i$ has eigenvalue $\lambda_i$, $i=1,\dots,n$. In particular,
\[
 \f_1 = \frac{1_V}{\sqrt{n}}. 
\]

Let $1_S = \sum_{i=1}^n c_i \f_i$ be the decomposition of $1_S$ with respect to the chosen basis. Then
\[
 c_1 = \langle 1_S, \f_1\rangle = \frac{|S|}{\sqrt{n}},\,\text{and } \sum_{i=1}^n c_i^2 = \langle 1_S, 1_S\rangle = |S|.
\]

All these allow us to derive the following chain of inequalities,
\begin{equation}\label{ineq:hoffman}
\begin{aligned}
0 = &  \langle A 1_S, 1_S \rangle = \left \langle \sum_{i=1}^n c_i \lambda_i \f_i, \sum_{i=1}^n c_i \f_i\right \rangle = \\
   & = \sum_{i=1}^n \lambda_i c_i^2 = c_1^2 +  \sum_{i=2}^n \lambda_i c_i^2 \geq c_1^2 + \lambda_n\sum_{i=2}^n c_i^2  = \\
   &  = c_1^2 + \lambda_n \left( \langle 1_S, 1_S \rangle - c_1^2\right) = \\
   & = \left(1-\lambda_n\right) \frac{|S|^2}{n} + \lambda_n |S|,
\end{aligned} 
\end{equation}
which implies the statement of the theorem.
\end{proof}

\begin{proof}[Proof of Theorem~\ref{thm:ext-hoffman}]In order to show that $S$ in the proof of Theorem~\ref{thm:hoffman} is an extremal design we need to show that $1_S$ is a linear combination of the constant function and one other eigenfunction of $A$. Since the bound of Theorem~\ref{thm:hoffman} is sharp for $S$, the chain of inequalities~(\ref{ineq:hoffman}) has also to be sharp for it. This holds iff for some $k\in\mathbb{N}$ (in the notation of the proof above), 
 \[
  1_S = c_1\f_1 + \sum_{i=n-k}^n c_i\f_i,\text{ and } \lambda_{n-k}=\dots=\lambda_n.
 \]
 Therefore, $1_S = c_1\f_1 + C f$, where $f =  \sum_{i=n-k}^n c_i\f_i$ is an eigenfunction of $A$ with eigenvalue $\lambda_n$.
\end{proof}

\begin{proof}[Proof of Theorem~\ref{thm:cheeger}.]
 Let $V=S\sqcup T$ be a partition of the vertex set into non-empty parts realizing the Cheeger constant, i.e.,
 \[
  h(G) = \frac{|V||E(S,T)|}{d|S||T|}.
 \]
 Let $1_S$ and $1_T$ be the characteristic functions of $S$ and $T$, respectively. As before, let $\f_1,\dots,\f_n$ be an orthonormal basis of eigenfunctions numbered w.r.t. the numbering of the eigenvalues $\lambda_1 > \lambda_2 \geq \dots\geq \lambda_n$. 
 Let $1_S = \sum_{i=1}^n c_i \f_i$ be the decomposition of $1_S$ w.r.t. this basis. Then, since $1_S + 1_T = 1_V$ and $1_V = \sqrt{n} \f_1$, the decomposition of $1_T$ is
 \[
  1_T = (\sqrt{n} - c_1)\f_1 + \sum_{i=2}^n (-c_i)\f_i.
 \]
 In the rest, the proof is similar to that of~\ref{thm:hoffman}. Namely, note first that 
 \[
  c_1 = \langle 1_S,\f_1\rangle = \frac{|S|}{\sqrt{n}},\text{ and } \sum_{i=1}^n c_i^2 = \langle 1_S,1_S \rangle = |S|.
 \]
And then note that
\begin{equation}\label{ineq:cheeger}
\begin{aligned}
\frac{1}{d}|E(S,T)| = &  \langle A 1_S, 1_T \rangle = \\
   & = \left \langle \sum_{i=1}^n c_i \lambda_i \f_i, (\sqrt{n} - c_1)\f_1+ \sum_{i=2}^n (-c_i) \f_i\right \rangle = \\
   &  = c_1(\sqrt{n}-c_1) +  \sum_{i=2}^n \lambda_i (-c_i^2) \\
   & \geq c_1(\sqrt{n}-c_1) +  \lambda_2 \sum_{i=2}^n (-c_i^2)\\
   &  = c_1(\sqrt{n}-c_1) -  \lambda_2 \left(\sum_{i=1}^n c_i^2 - c_1^2\right) = \\
   & = \frac{|S|}{\sqrt{n}}\frac{n-|S|}{\sqrt{n}} - \lambda_2\left(|S| - \frac{|S|^2}{n}\right) = \\
   & = \frac{|S|(n-|S|)}{n}\left(1-\lambda_2\right),
\end{aligned} 
\end{equation}
which completes the proof.
\end{proof}

\begin{proof}[Proof of Theorem~\ref{thm:ext-cheeger}.]
In order to show that $S$ in the proof of Theorem~\ref{thm:cheeger} is an extremal design we need to show that $1_S$ is a linear combination of the constant function and one other eigenfunction of $A$. As in the proof of Theorem~\ref{thm:ext-hoffman}, note that in order for the bound on the Cheeger constant to be sharp, the chain of inequalities~\ref{ineq:cheeger} has to be sharp as well. This implies that there exists $k\in\mathbb{N}$, such that
 \[
  1_A = c_1 \f_1 + \sum_{i=2}^k c_i\f_i,\text{ and } \lambda_2 = \dots = \lambda_k.
 \]
 In this case, $1_A$ is a linear combination of the constant function and an eigenfunction with eigenvalue $\lambda_2$.
\end{proof}

\begin{proof}[Proof of Theorem~\ref{thm:hypercube}.]
 First note, that $S_I$ and $T_I$ make a partition of the vertex set of $Q_n$, hence it is enough to show that one of them is a graphical design.
 The eigenfunction $\chi_I$ takes value $-1$ on $S_I$ and value $1$ on $T_I$. Therefore the characteristic function $1_S$ of the set $S_I$ can be decomposed as
 \[
  1_S = \frac{1}{2}\left(1_V - \chi_I\right).
 \]
\end{proof}

\section{Graphical Designs in Weak Products of Graphs}\label{sec:weak-prod}
Let $G_1$ and $G_2$ be regular graphs on the vertex sets $V_1$ and $V_2$, respectively. Their weak (also known as tensor or Kronecker) product $G_1\times G_2$ is defined as follows: it is a graph on the cartesian product $V_1\times V_2$ as the vertex set, and $(v_1,v_2),(u_1,u_2)\in V_1\times V_2$ are connected by an edge iff $v_i\sim u_i$ in $G_i$ for both $i=1,2$.
Let $A_1$ and $A_2$ be the normalized adjacency operators of $G_1$ and $G_2$, respectively. Then the normalized adjacency operator $A$ of $G_1\times G_2$ is the tensor product of $A_1$ and $A_2$, and its matrix is equal to the Kronecker product of the matrices of $A_1$ and $A_2$. (See~\cite{weichsel1962kronecker}.) This permits a desciption of the eigenvalues and eigenfunctions of $A$ in terms of that of $A_1$ and $A_2$.
If we denote the eigenvalues of $A_i$ by $\lambda_{n_i}^i\leq \dots\leq\lambda_{1}^i$ for $i=1,2$, then the eigenvalues of $A$ are the products of the eigenvalues of $A_1$ and $A_2$, i.e.,
\[
 \lambda_{j_1}^1 \lambda_{j_2}^2,\,j_1=1,\dots,n_1,\,j_2=1,\dots,n_2.
\]
Also, if for $i=1,2$ function $f_i$ is an eigenfunction of $A_i$ with eigenvalue $\mu_i$, then the function $f_1\times f_2$ is an eigenfunction of $A$ with eigenvalue $\mu_1\mu_2$. Here $f_1\times f_2:V_1\times V_2\to \mathbb{R}$ is defined as
\[
 (f_1 \times f_2)(v_1,v_2)  = f_1(v_1)f_2(v_2),
\]
for $(v_1,v_2)\in V_1\times V_2$. 
This allows us to deduce the following result.
\begin{theorem}\label{thm:weak-prod}
 If $W_1\subset V_1$ and $W_2\subset V_2$ are graphical designs in $G_1$ and $G_2$ of order $k_1$ and $k_2$, respectively, then 
 \begin{enumerate}
  \item \label{thm:weak-1}$W_1\times W_2$ is a graphical design in $G_1\times G_2$ of order at most $(k_1+1)(k_2+1)-1$.
  \item \label{thm:weak-2}$W_1\times V_2$ and $V_1\times W_2$ are graphical designs of order $k_1$ and $k_2$, respectively.
  \item \label{thm:weak-3}$W_1^{\times r} = W_1\times \dots\times W_1$ is a graphical design of order at most $(k_1+1)^r-1$ in $G_1^{\times r}$, where $r\in\mathbb{N}$.
 \end{enumerate}
\end{theorem}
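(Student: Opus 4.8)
The plan is to push everything through the description of eigenfunctions of the weak product recalled above. For a connected regular graph $H$ and $F\in\mathbb{R}^{V(H)}$, write $F=\sum_\mu P^H_\mu F$ for its spectral decomposition, where $P^H_\mu$ is the orthogonal projection onto $E^H_\mu:=\ker(A_H-\mu I)$; since $H$ is connected, $E^H_1$ is spanned by the constant function, and the order of a design $W$ in $H$ equals the number of eigenvalues $\mu\neq1$ with $P^H_\mu 1_W\neq0$. Indeed, the functions $P^H_\mu 1_W$ over those $\mu$, together with $P^H_1 1_W$ (a multiple of $1_{V(H)}$), express $1_W$ in the required form; conversely, in any expression $1_W=a\,1_{V(H)}+\sum_{i=1}^{j}b_i\varphi_i$ with $\varphi_i$ eigenfunctions, grouping the $\varphi_i$ by eigenvalue shows that at most $j$ eigenvalues $\neq1$ have nonzero projection.

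For part~(\ref{thm:weak-1}), note $1_{W_1\times W_2}=1_{W_1}\times 1_{W_2}$, so expanding both factors by their spectral decompositions and using bilinearity of $\times$ gives
\[
 1_{W_1\times W_2}=\sum_{\mu_1,\mu_2}\bigl(P^{G_1}_{\mu_1}1_{W_1}\bigr)\times\bigl(P^{G_2}_{\mu_2}1_{W_2}\bigr).
\]
Each nonzero summand is an eigenfunction of $A$ (a product of eigenfunctions of $A_1$ and $A_2$), the summand with $\mu_1=\mu_2=1$ is a scalar multiple of $1_{V_1\times V_2}$, and there are at most $k_1+1$ values of $\mu_1$ and $k_2+1$ values of $\mu_2$ for which the corresponding projection is nonzero. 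Hence $1_{W_1\times W_2}$ is a linear combination of the constant function and at most $(k_1+1)(k_2+1)-1$ further eigenfunctions. The upper bound in part~(\ref{thm:weak-2}) is the special case $W_2=V_2$ (and symmetrically $W_1=V_1$): then $1_{W_2}$ equals the constant $1_{V_2}$, so only the $\mu_2=1$ term survives in the display, leaving at most $k_1+1$ nonzero terms, one of them a multiple of the constant. Part~(\ref{thm:weak-3}) then follows by induction on $r$ from associativity of the weak product and $W_1^{\times r}=W_1^{\times(r-1)}\times W_1$: by the inductive hypothesis the order of $W_1^{\times(r-1)}$ is at most $(k_1+1)^{r-1}-1$, and part~(\ref{thm:weak-1}) bounds the order of $W_1^{\times r}$ by $(k_1+1)^{r-1}\cdot(k_1+1)-1=(k_1+1)^r-1$.

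The point needing a real argument is the \emph{exactness} in part~(\ref{thm:weak-2}). Consider the averaging map $\pi\colon\mathbb{R}^{V_1\times V_2}\to\mathbb{R}^{V_1}$ given by $(\pi F)(v_1)=\frac1{|V_2|}\sum_{v_2\in V_2}F(v_1,v_2)$; it satisfies $\pi(1_{W_1\times V_2})=1_{W_1}$ and $\pi(1_{V_1\times V_2})=1_{V_1}$. I would verify that $\pi$ sends each eigenfunction of $A$ to an eigenfunction of $A_1$ or to $0$: writing an eigenfunction $F\in E^{G_1\times G_2}_\nu$ as $F=\sum_{\mu_1\mu_2=\nu}F_{\mu_1,\mu_2}$ with $F_{\mu_1,\mu_2}\in E^{G_1}_{\mu_1}\otimes E^{G_2}_{\mu_2}$, the map $\pi$ annihilates every term with $\mu_2\neq1$ (a non-constant $A_2$-eigenfunction has mean $0$) and carries the $\mu_2=1$ term, which lies in $E^{G_1}_\nu\otimes\mathbb{R}\,1_{V_2}$, into $E^{G_1}_\nu$. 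Consequently, if $W_1\times V_2$ has order $m$, applying $\pi$ to an expression of $1_{W_1\times V_2}$ as a combination of $1_{V_1\times V_2}$ and $m$ eigenfunctions of $A$ exhibits $1_{W_1}$ as a combination of $1_{V_1}$ and at most $m$ eigenfunctions of $A_1$, whence $k_1=\operatorname{order}(W_1)\le m$; combined with the upper bound this gives $m=k_1$. I expect this last step to be the main obstacle, precisely because a general eigenfunction of $A$ need not be a pure tensor $f_1\times f_2$: one must use the decomposition $E^{G_1\times G_2}_\nu=\bigoplus_{\mu_1\mu_2=\nu}E^{G_1}_{\mu_1}\otimes E^{G_2}_{\mu_2}$ and track which tensor components survive averaging, rather than relying only on the pure-tensor eigenfunctions exhibited before the theorem.
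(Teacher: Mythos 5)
Your proof is correct, and for parts~(1) and~(3) it is essentially the paper's argument (expand $1_{W_1}\times 1_{W_2}$ bilinearly into products of eigenfunctions and count, then iterate), just phrased more carefully via spectral projections; the observation that the order of $W$ equals the number of eigenvalues $\mu\neq 1$ with $P_\mu 1_W\neq 0$ is a clean and correct reformulation of the definition. Where you genuinely go beyond the paper is part~(2): the paper's proof says only that ``(2) follows the same logic,'' which, like your specialization $W_2=V_2$, establishes merely the upper bound $\operatorname{order}(W_1\times V_2)\le k_1$, whereas the statement asserts the order is exactly $k_1$. Your averaging map $\pi$, together with the decomposition $E^{G_1\times G_2}_\nu=\bigoplus_{\mu_1\mu_2=\nu}E^{G_1}_{\mu_1}\otimes E^{G_2}_{\mu_2}$ showing that $\pi$ maps $E^{G_1\times G_2}_\nu$ into $E^{G_1}_\nu$, supplies the missing lower bound $k_1\le\operatorname{order}(W_1\times V_2)$, and you are right that one must work with the full tensor decomposition of the eigenspaces rather than only the pure-tensor eigenfunctions. (An equivalent shortcut: since $1_{W_1\times V_2}$ lies in $\bigoplus_{\mu_1}E^{G_1}_{\mu_1}\otimes E^{G_2}_1$, its projection onto $E^{G_1\times G_2}_\nu$ is exactly $(P^{G_1}_\nu 1_{W_1})\times 1_{V_2}$, so the counts of nonzero non-constant projections coincide.) The only cosmetic imprecision, shared with the paper, is that in part~(3) you apply part~(1) with the inductive bound ``order at most $(k_1+1)^{r-1}-1$'' in place of an exact order; the proof of~(1) visibly yields the monotone ``at most'' version needed.
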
 

\begin{proof}[Proof of Theorem~\ref{thm:weak-prod}.]
In order to show~(\ref{thm:weak-1}), note that $1_{W_1\times W_2} = 1_{W_1}\times1_{W_2}$, and that if for $i=1,2$, $1_{W_i}$ is a linear combination of $1_{V_i}$ and $k_i$ other eigenfunctions, then $1_{W_1\times W_2}$ is linear combination of $1_{V_1\times V_2}$ and other $(k_1+1)(k_2+1)-1$ eigenfunctions.
The proof of~(\ref{thm:weak-2}) follows the same logic, noting that $1_{V_i}$ is an eigenfunction of $A_i$, for $i=1,2$.
Finally,~(\ref{thm:weak-3}) follows from~(\ref{thm:weak-1}) by applying it repeatedly for $G_2 = G_1$. 
\end{proof}

\subsection{Alon-Dinur-Friedgut-Sudakov Stability Theorem for Weak Products}
Theorem~\ref{thm:weak-prod} was largely inspired by~\cite{alon2004graph}. There it was proved that the Hoffman bound remains sharp under the weak product operation. There they also proved a stability result for independent sets that almost achieve the Hoffman bound in weak powers of a graph, this is Theorem~\ref{thm:adfs-stability} below.
\begin{theorem}\cite[Theorem 1.4]{alon2004graph}
 Let $H$ be a connected $d$-regular graph on $n$ vertices and let $\lambda_n \leq \dots \leq \lambda_1 = 1$ be the eigenvalues of its normalized adjacency operator. If
 \[
  \alpha(H) = \frac{-\lambda_n}{1-\lambda_n},
 \]
 then for every integer $r\geq 1$,
  \[
  \alpha(H^{\times r}) = \frac{-\lambda_n}{1-\lambda_n}.
 \]
Moreover, if $H$ is also non-bipartite, and if $I$ is an independent set of the size $\frac{-\lambda_n}{1-\lambda_n}n^r$ in $H^{\times r}$, then there exists a coordinate $i\in\{1,\dots,r\}$ and a
maximum independent set $J$ in $H$, such that
\[
 I = \{ v\in V^r \mid v_i \in J \}.
\]
\end{theorem}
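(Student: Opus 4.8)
The plan is to separate a spectral computation from a short combinatorial deduction, reusing the equality analysis of the Hoffman bound carried out in the proof of Theorem~\ref{thm:ext-hoffman}.

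First I would settle $\alpha(H^{\times r})=\frac{-\lambda_n}{1-\lambda_n}$. The lower bound is immediate: if $J$ is a maximum independent set of $H$, then $\{v\in V^r\mid v_1\in J\}$ is independent in $H^{\times r}$ and has size $\alpha(H)\,n^r$. For the matching upper bound I would identify the bottom of the spectrum of the normalized adjacency operator $A^{(r)}$ of $H^{\times r}$, whose eigenvalues are the products $\lambda_{j_1}\cdots\lambda_{j_r}$. Since $H$ is connected and non-bipartite, $-1<\lambda_n<0$, so every eigenvalue $\mu$ of $A$ satisfies $|\mu|\le 1$ with equality only for $\mu=\lambda_1=1$, while every negative eigenvalue satisfies $|\mu|\le|\lambda_n|<1$. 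Hence a negative product $\lambda_{j_1}\cdots\lambda_{j_r}$ has absolute value $\prod_k|\lambda_{j_k}|\le|\lambda_n|$: it contains an odd number of negative factors, one negative factor already caps the absolute value at $|\lambda_n|$, and three or more would push it below $|\lambda_n|^{3}<|\lambda_n|$. Equality therefore forces exactly one slot to carry an eigenfunction from the $\lambda_n$-eigenspace of $A$ and every other slot to carry the constant eigenfunction. Thus the smallest eigenvalue of $A^{(r)}$ is $\lambda_n$; its eigenspace is the direct sum over the coordinates $i$ of the spaces of functions $(v_1,\dots,v_r)\mapsto h(v_i)$ with $h$ in the $\lambda_n$-eigenspace of $A$; and the Hoffman bound of Theorem~\ref{thm:hoffman} applied to $H^{\times r}$ gives $\alpha(H^{\times r})\le\frac{-\lambda_n}{1-\lambda_n}$.

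Now let $I\subset V^r$ be independent with $|I|=\frac{-\lambda_n}{1-\lambda_n}n^r$, so that $I$ attains the Hoffman bound for $H^{\times r}$. The equality analysis from the proof of Theorem~\ref{thm:ext-hoffman} then says that $1_I$ is a linear combination of the constant function and a function in the $\lambda_n$-eigenspace of $A^{(r)}$, which by the description above means
\[
 1_I(v_1,\dots,v_r)=c+\sum_{i=1}^{r}h_i(v_i),\qquad c=\tfrac{-\lambda_n}{1-\lambda_n},\quad h_i\ \text{in the }\lambda_n\text{-eigenspace of }A.
\]
The combinatorial step uses that $1_I$ is $\{0,1\}$-valued: fixing all coordinates but two, say $i$ and $j$, forces $h_i(v_i)+h_j(v_j)$ to take at most two values as $(v_i,v_j)$ ranges over $V^2$; but if $h_i$ and $h_j$ were both non-constant, then picking $v_i\neq v_i'$ with $h_i(v_i)\neq h_i(v_i')$ and $v_j\neq v_j'$ with $h_j(v_j)\neq h_j(v_j')$ gives four sums among which at least three are distinct, a contradiction. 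So at most one $h_i$ is non-constant, and since $1_I$ is not constant exactly one is, say $h_{i_0}$. Absorbing the remaining constant $h_i$'s into $c$ yields $1_I(v)=c'+h_{i_0}(v_{i_0})$, so $I=\{v\in V^r\mid v_{i_0}\in J\}$ with $J=\{u\in V\mid c'+h_{i_0}(u)=1\}$. Finally $J$ is independent in $H$ (two adjacent vertices of $J$, completed in the other coordinates by any pair of adjacent vertices of $H$, would be two adjacent vertices of $I$), and $|I|=|J|\,n^{r-1}$ forces $|J|=\alpha(H)\,n$, so $J$ is a maximum independent set.

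The crux — and the only place non-bipartiteness enters — is the spectral computation that the $\lambda_n$-eigenspace of $A^{(r)}$ consists solely of functions depending on a single coordinate; this ``junta'' structure is exactly what converts the Hoffman equality case into the dictatorship conclusion. When $H$ is bipartite, $\lambda_n=-1$, that eigenspace is far larger (it contains, for instance, products of the bipartition sign function with itself), and the conclusion genuinely fails, so the hypothesis is essential. Everything else — the Hoffman equality analysis and the $\{0,1\}$-valued argument — is routine.
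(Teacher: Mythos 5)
This theorem is quoted in the paper from Alon--Dinur--Friedgut--Sudakov \cite[Theorem 1.4]{alon2004graph} and is not proved there, so there is no in-paper proof to compare against; your argument is, in outline, the standard proof from the original source, and it is essentially correct. The three ingredients are all in place: (i) the bottom of the spectrum of $A^{\otimes r}$ is $\lambda_n$, with eigenspace spanned by functions depending on a single coordinate; (ii) the equality case of the Hoffman bound (as in the proof of Theorem~\ref{thm:ext-hoffman}) puts $1_I$ in the span of the constant and that eigenspace; (iii) Booleanity of $1_I=c+\sum_i h_i(v_i)$ forces all but one $h_i$ to vanish (each $h_i$ is orthogonal to constants, so ``constant'' means zero), and your four-point argument correctly yields at least three distinct values if two of the $h_i$ are nonzero. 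One small wrinkle: the first assertion of the theorem does not assume $H$ non-bipartite, yet your upper-bound argument invokes $|\lambda_n|<1$. The fix is immediate --- for any eigenvalues in $[-1,1]$, a negative product $\lambda_{j_1}\cdots\lambda_{j_r}$ has absolute value at most $|\lambda_{j_k}|\le|\lambda_n|$ for any negative factor $\lambda_{j_k}$, so the least eigenvalue of $A^{\otimes r}$ is $\lambda_n$ even when $\lambda_n=-1$, and the Hoffman bound (whose proof needs neither connectivity of $H^{\times r}$ nor simplicity of the top eigenvalue) still gives $\alpha(H^{\times r})\le\frac{-\lambda_n}{1-\lambda_n}$. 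Non-bipartiteness is genuinely needed only where you use it decisively, namely to conclude that equality in the product forces all but one factor to equal $1$ and hence that the bottom eigenspace consists of one-coordinate functions.
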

\begin{theorem}\cite[Theorem 1.5]{alon2004graph}\label{thm:adfs-stability}
 Let $H$ be a connected $d$-regular non-bipartite graph on $n$ vertices and let $\lambda_n \leq \dots \lambda_1 = 1$ be the eigenvalues of its normalized adjacency operator. Assume also that the Hoffman bound is sharp for $H$.
 
 Then there exists a constant $M = M(H)$ such that for any $\e > 0$ the following holds. Let $G = H^{\times r}$ and let $I$ be an independent set in $G$ such that 
 \[
  \frac{|I|}{n^r} = \alpha(H) - \epsilon,
 \]
then there exists an independent set $I'$ in $G$ such that 
 \[
  \frac{|I'|}{n^r} = \alpha(H),\text{ and } \frac{|I\Delta I'|}{n^r} < M\epsilon,
 \]
 where $\Delta$ stands for the symmetric difference of sets.
\end{theorem}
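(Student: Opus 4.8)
The plan is to run the proof of the Hoffman bound (Theorem~\ref{thm:hoffman}) quantitatively on the weak power $G = H^{\times r}$, extract from the slack $\e$ an $L^2$-defect of size $O_H(\e)$ that localises $1_I$ near the bottom eigenspace of $G$, and then convert this analytic statement into combinatorial closeness to a canonical maximum independent set. Write $f = 1_I$ and $\beta = |I|/n^r = \alpha(H) - \e$. Fixing for each coordinate $i$ an orthonormal eigenbasis $\f^{(i)}_1,\dots,\f^{(i)}_n$ of $A_H$ with $\f^{(i)}_1 = 1_{V(H)}/\sqrt n$, the tensors $\f_{\vec{\jmath}} = \f^{(1)}_{j_1}\otimes\dots\otimes\f^{(r)}_{j_r}$ diagonalise $A = A_H^{\otimes r}$ with eigenvalues $\mu_{\vec{\jmath}} = \prod_i \lambda_{j_i}$. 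The first step is a purely spectral observation. Since $H$ is connected, $1$ is a simple eigenvalue of $A_H$; since $H$ is non-bipartite, $-1 < \lambda_n < 0$, so in particular $\alpha(H) < 1/2$. An elementary estimate on products of numbers in $[\lambda_n,1]$ then shows that $\mu_{\vec{\jmath}}\geq\lambda_n$ for every $\vec{\jmath}\neq\vec{1}$, that equality holds exactly on the \emph{level-one} subspace $U$ spanned by those $\f_{\vec{\jmath}}$ having a single nontrivial coordinate, carrying $\lambda_n$, and that $\mu_{\vec{\jmath}}\geq\lambda'$ on the orthogonal complement $W$ of $\mathbb{R}1_{V(G)}\oplus U$, for some constant $\lambda' = \lambda'(H) > \lambda_n$ independent of $r$; non-bipartiteness is exactly what keeps products involving two or more nontrivial coordinates bounded away from $\lambda_n$.

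Next I would decompose $f = \beta 1_{V(G)} + g + h$ with $g,h$ the orthogonal projections of $f$ onto $U$ and $W$, so $\|g\|^2 + \|h\|^2 = \langle f,f\rangle - \beta^2 n^r = \beta(1-\beta)n^r$, and rerun the Hoffman computation. Independence of $I$ gives $0 = \langle Af,f\rangle = \beta^2 n^r + \lambda_n\|g\|^2 + \sum_{W}\mu_{\vec{\jmath}}c_{\vec{\jmath}}^2 \geq \beta^2 n^r + \lambda_n\|g\|^2 + \lambda'\|h\|^2$; substituting $\|g\|^2 = \beta(1-\beta)n^r - \|h\|^2$ and using the defining identity $\alpha(H)(1-\lambda_n) + \lambda_n = 0$, which makes $\beta(1-\lambda_n)+\lambda_n = -\e(1-\lambda_n)$, this rearranges to $(\lambda'-\lambda_n)\|h\|^2 \leq \beta(1-\lambda_n)\e\, n^r$, i.e.\ $\|h\|^2 \leq C_1\e\, n^r$ with $C_1 = C_1(H)$. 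Thus $f$ differs in squared $L^2$-norm by at most $C_1\e\, n^r$ from $\beta 1_{V(G)} + g$, whose nonconstant part is a sum $g(v) = \sum_i \phi_i(v_i)$ of single-coordinate functions $\phi_i$ in the $\lambda_n$-eigenspace of $A_H$; equivalently, in the Efron--Stein decomposition of the Boolean function $f$ the part of level at least two carries squared $L^2$-mass $\leq C_1\e/n^r$ per vertex.

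The combinatorial heart of the argument comes next. Because $f$ is $\{0,1\}$-valued and its level-$\geq 2$ mass is $O_H(\e)$ (relative to $\|f\|^2$), an FKN-type statement for products of copies of a fixed finite probability space (the Friedgut--Kalai--Naor dichotomy, which over such products follows from hypercontractivity with constants uniform in the number $r$ of coordinates) forces $f$ to be $L^2$-close, in squared norm to within $C_2\e\, n^r$, either to a constant or to a single-coordinate Boolean function; the constant alternative is excluded since $\|f - \beta 1_{V(G)}\|^2 = \beta(1-\beta)n^r$ is bounded below using $\beta < 1/2$. So there are a coordinate $i_0$ and a set $J'\subseteq V(H)$ with $\|f - \psi\|^2 \leq C_2\e\, n^r$, where $\psi(v) = 1_{J'}(v_{i_0})$; writing $I'' = \{v\in V(G): v_{i_0}\in J'\}$, this reads $|I\,\Delta\, I''|\leq C_2\e\, n^r$. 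Now $J'$ is almost independent in $H$: since $I$ has no edges, every edge of $G$ inside $I''$ meets $I''\setminus I$, and counting the edges of $G = H\times H^{\times(r-1)}$ lying above a single edge of $H$ in coordinate $i_0$ yields $e_H(J')\leq C_3\e\, n$; also $|J'|\geq (\beta - C_2\e)n$. A deletion step (one endpoint per internal edge) produces an independent set $J^\ast\subseteq J'$ with $|J'\setminus J^\ast|\leq C_3\e\, n$ and $|J^\ast|\geq (\alpha(H) - C_4\e)n$; since $H$ is fixed and $\alpha(H)n\in\mathbb{Z}$, for $\e$ below a threshold $\e_0(H)$ this forces $|J^\ast| = \alpha(H)n$, so $J^\ast$ is a maximum independent set of $H$. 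Then $I' := \{v\in V(G): v_{i_0}\in J^\ast\}$ is independent of size $\alpha(H)n^r = \alpha(G)n^r$ (the last equality being the statement that the Hoffman bound stays sharp under weak powers, \cite[Theorem 1.4]{alon2004graph}), hence a maximum independent set of $G$, and $|I\,\Delta\, I'|\leq |I\,\Delta\, I''| + |I''\,\Delta\, I'| \leq (C_2+C_3)\e\, n^r$. For $\e\geq\e_0(H)$ the bound is trivial by comparing $I$ with any fixed maximum independent set of $G$; gathering all constants yields a single $M = M(H)$ valid for every $\e$.

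The first two paragraphs are routine once the spectral estimate on the $\mu_{\vec{\jmath}}$ is in hand. The main obstacle is the third paragraph: establishing (or invoking) the FKN-type localisation onto one coordinate with constants independent of $r$, together with the bookkeeping that makes the passage ``almost independent in $H$ $\Rightarrow$ within $O_H(\e)$ of a maximum independent set of $H$'' quantitative. This is precisely where the Fourier-analytic machinery of \cite{alon2004graph} is doing its work, and I would expect either to quote the relevant FKN statement over product spaces or to reprove the part of it that is needed here.
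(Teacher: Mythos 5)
This theorem is not proved in the paper at all: it is quoted verbatim, with attribution, from \cite{alon2004graph}, so there is no in-paper proof to measure your attempt against; the relevant comparison is with the actual Alon--Dinur--Friedgut--Sudakov argument, and your reconstruction follows essentially that route. The spectral step is correct: for non-bipartite $H$ one has $-1<\lambda_n<0$ and $|\lambda_2|<1$, so any product of two or more non-trivial eigenvalues exceeds $\lambda_n$ by a gap depending only on $H$, the bottom eigenspace of $A_H^{\otimes r}$ is exactly the span of one-coordinate tensors carrying $\lambda_n$, and the rest of the non-constant spectrum sits above some $\lambda'(H)>\lambda_n$ uniformly in $r$. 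The quantitative rerun of the Hoffman computation is also correct (the identity $\beta(1-\lambda_n)+\lambda_n=-\e(1-\lambda_n)$ and the resulting bound $\|h\|^2\le C_1\e n^r$ check out), as does the cleanup: the edge-counting giving $e_H(J')\le C_2 d\,\e n$, the deletion step, and the integrality argument forcing $|J^\ast|=\alpha(H)n$ once $\e$ is below a threshold depending only on $H$, with the trivial bound covering large $\e$. The one place the argument is not self-contained is the one you flag yourself: the FKN-type dichotomy for Boolean functions on a general product space $V(H)^r$, with constants independent of $r$, asserting that small Fourier mass above level one forces closeness to a constant or to a Boolean function of a single coordinate. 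That statement is true (hypercontractivity for a fixed base space with minimal atom $1/n$ does not degrade with $r$), but it is the main technical content of \cite{alon2004graph}; invoking it defers the entire difficulty of the theorem to the very reference being reconstructed. As a citation-backed outline the proposal is sound and faithful to the original; as a standalone proof it is incomplete precisely at that lemma.
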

\section{Open Questions and Further Directions}\label{sec:open-q}

\subsection{Hoffman bound and Cheeger inequality in other settings}\label{subsec:l2-hoffman}
The Hoffman bound, Theorem~\ref{thm:hoffman}, was generalized to the setting of bounded operators on $L^2$-functions on a measure space in~\cite{bachoc14}. It was then applied to the questions of coloring and independence ratio in different settings, in particular, a sphere in \cite{decorte2015}, Euclidean space in \cite{bachoc2015}, hyperbolic plane in \cite{DeCorte2019}, and hyperbolic surfaces in \cite{golubev2019periodic}. 
The Hoffman bound was also generalized to hypergraphs (equivalently, simplicial complexes) in~\cite{golubev2016chromatic} and later in~\cite{AnnaGundert2015}. In~\cite{filmus2020high}, a new generalization is presented, which is stable under the operation of weak product of hypergraphs.
As it was pointed out, the Cheeger inequality was originally proved in the setting of Riemannian manifolds. There has been an extensive work on generalizing Cheeger inequality to the setting of simplicial complexes, see~\cite{parzanchevski2016isoperimetric, gundert2014higher, lubotzky2017high,lubotzky2014ramanujan,oppenheim2014local,schneeberger2017cheeger}, and~\cite{szymanski2017higher} for a friendly exposition.

\subsection{Open Questions}

\begin{enumerate}
 \item In Theorem~\ref{thm:adfs-stability} of~\cite{alon2004graph}, the stability result is shown for independent sets in weak powers of graphs. Can a result of similar nature be proven for graphical designs?
 \item The hypercube graph $Q_n$ is the $n$-th cartesian power of the complete graph $K_2$ on two vertices (see~\cite{harary1988survey}). What can be said about graphical designs in the cartesian product?  
 \item The results of this paper can probably be extended to weighted graphs, which are of great use in extremal combinatorics (see e.g.~\cite{friedgut2008measure}).  
 \item Spectral theory of hypergraphs and simplicial complexes has been actively developing in recent years (see e.g.~\cite{lubotzky2017high}). What is a reasonable generalization of the graphical designs to this setting?
 \item Spectral techniques have been applied to the questions of coloring and independence ratio for distance graphs in various settings (as briefly described in Subsection~\ref{subsec:l2-hoffman}). Can these techniques help in constructing discrete designs in the corresponding space?
\end{enumerate}

\subsection*{Acknowledgments} The author is grateful to Stefan Steinerberger for the help in preparation of this paper. 

The author is supported by the SNF grant number 200020\_169106.

\bibliographystyle{plain}
\bibliography{mybib}

\end{document}